\documentclass[english]{amsart}
\usepackage[utf8]{inputenc}
\usepackage{amssymb,amsmath,amsthm,amsrefs,newlfont,enumerate,array,graphicx,tikz,tkz-tab,xcolor,bbold,float,subcaption}
\usepackage{mathrsfs}
\usepackage[colorlinks, citecolor=blue, linkcolor=red, pdfstartview=FitB]{hyperref}
\usepackage[noabbrev]{cleveref} 
\usepackage[shortlabels]{enumitem}
\makeatletter
\newcommand{\customlabel}[2]{\def\@currentlabel{#2}\label{#1}}
\makeatother

\newtheorem{theorem}{Theorem}[section]
\newtheorem*{theorem*}{Theorem}
\newtheorem{lemma}[theorem]{Lemma}
\newtheorem{corollary}[theorem]{Corollary}
\newtheorem{fact}[theorem]{Fact}
\newtheorem{proposition}[theorem]{Proposition}

\theoremstyle{definition}
\newtheorem{definition}[theorem]{Definition}
\newtheorem{remark}[theorem]{Remark}
\newtheorem{example}[theorem]{Example}

\newcommand{\ps}[1]{\left<#1\right>}
\let\Im\relax\DeclareMathOperator{\Im}{Im}
\let\Re\relax\DeclareMathOperator{\Re}{Re}
\newcommand{\bb}{\mathbb}

\DeclareMathOperator{\spa}{span}
\DeclareMathOperator{\w}{wind}

\title[Dynamical properties of Toeplitz operators]{Introduction to the dynamical properties of Toeplitz operators on the
Hardy space of the unit disc.}

\author[Fricain]{Emmanuel Fricain}
 \address{Univ. Lille, CNRS, UMR 8524 - Laboratoire Paul Painlevé, F-59000 Lille, France}
 \email{emmanuel.fricain@univ-lille.fr}

\author[Ostermann]{Ma\"eva Ostermann}
\address{Univ. Lille, CNRS, UMR 8524 - Laboratoire Paul Painlevé, F-59000 Lille, France}
\email{maeva.ostermann@univ-lille.fr}
\begin{document}

\keywords{Toeplitz operators, Hardy spaces, hypercyclicity.}

\subjclass[2010]{47B35, 47A16, 30H10, 30H15}
\thanks{This work was supported in part by the project COMOP of the French National Research Agency (grant ANR-24-CE40-0892-01) and a variety of sponsors of the ACOTCA conference you could find here: \href{https://indico.math.cnrs.fr/event/13430/}{https://indico.math.cnrs.fr/event/13430/}. The authors acknowledge the support of the CDP C$^2$EMPI, as well as of the French State under the France-2030 program, the University of Lille, the Initiative of Excellence of the University of Lille, and the European Metropolis of Lille for their funding and support of the R-CDP-24-004-C2EMPI project. The second author also acknowledges the support of the CNRS}

\maketitle
\begin{abstract}
These notes are based on a mini-course given at the ACOTCA conference 2025. The goal is to present full proofs of the first two key results regarding hypercyclic Toeplitz operators, in a way that is accessible to beginners. 
\end{abstract}

\section{Introduction}

These notes are based on a short mini-course, consisting of three 50-minute lectures, which the fist author delivered at the ACOTCA (XIX Advanced Course in Operator Theory and Complex Analysis) held in June 2025 in Clermont-Ferrand, France. The topic concerns the hypercyclicity of Toeplitz operators on the Hardy space $H^2$ of the open unit disc $\mathbb D$. 

These notes are intended neither as a comprehensive survey of the literature on this subject nor as a presentation of new results. Rather, their purpose is to provide an introductory account accessible to readers approaching the topic for the first time. To this end, the exposition is aimed at readers (participants) with a background corresponding to a standard master's level course in functional analysis and complex analysis and includes complete proofs of most of the results discussed.

The central question addressed in these notes is the following: given a function $\phi\in L^\infty(\mathbb T)$, where $\mathbb T=\partial \mathbb D$ denotes the unit circle equipped with the normalized Lebesgue measure $m$, can one determine some necessary and/or sufficient conditions on $\phi$ to ensure that the associated Toeplitz operator $T_\phi$ is hypercyclic on $H^2$? 

The structure of this mini-course is as follows. Lecture 1 (\Cref{section:HC}) provides a brief introduction to the notion of hypercyclicity, restricted to the tools required for our purposes. In particular, we establish the classical Godefroy–Shapiro criterion and recall some basic and well-known obstructions to hypercyclicity. Lecture 2 (\Cref{Section:Toeplitz}) contains a concise introduction to Hardy spaces and Toeplitz operators. Lecture 3 (\Cref{Section:HcToeplitz}) is devoted to the main question under consideration. Our discussion will focus on two fundamental results: the first one, due to Godefroy and Shapiro, characterizes hypercyclic Toeplitz operators with anti-analytic symbols; the second one, due to Shkarin, concerns tridiagonal Toeplitz operators. As will become apparent, even in this last restricted setting, the problem remains far from straightforward. The paper concludes with a brief mention of three more recent contributions to the study of hypercyclicity for Toeplitz operators.

To close this introduction, let us emphasize that the question of hypercyclicity for Toeplitz operators remains far from being completely resolved, and significant challenges persist. In particular, in \cite{FricainGrivauxOstermann_preprint}, you could find some more specific open problems. We hope that these notes will motivate interested readers to pursue further investigation into this topic.

\section{Hypercyclic operators}\label{section:HC}
\subsection{Definition of hypercyclicity}
We usually imagine chaos as something that arises in nonlinear systems, while linear systems are thought to be predictable and ``well-behaved''. But this intuition is not always correct. In fact, some natural linear operators can behave in a surprisingly chaotic way. One of their key features is the existence of a dense orbit: starting from a single point and repeatedly applying the operator, the trajectory can come arbitrarily close to every point in the space.

\begin{definition} More formally, let $X$ be a Fréchet space, that is a complete topological vector space whose topology is generated by a countable family of seminorms, and let $T:X\longrightarrow X$ be a continuous and linear operator on $X$ (in short $T\in \mathscr{L}(X))$. The operator $T$ is called \textit{hypercyclic} on $X$ if there exists a $x\in X$ such that its orbit under $T$,
\[orb(T,x)=\{T^nx\,;\,n\ge0\},\]
is dense in $X$. In such a case, $x$ is called a \textit{hypercyclic vector} for $T$ and the set of all hypercyclic vectors for $T$ is denoted by $HC(T)$.
\end{definition}
\begin{example}

The first known example of a hypercyclic operator was constructed by \textit{Birkhoff} in 1929 \cite{Birkhoff1929}, illustrating that such “chaotic” behavior can indeed occur even for linear operators. Let $Hol(\bb C)$ be the Fr\'echet space of entire functions, let $a$ be any non zero complex number and let $T_a:Hol(\bb C)\longrightarrow Hol(\bb C)$ be the associated translation operator defined by 
\[
(T_a f)(z)=f(a+z),\qquad z\in\mathbb C,\,f\in Hol(\bb C).
\]
Then $T_a$ is hypercyclic on $Hol(\bb C)$.
\end{example}

\begin{example}
Another early example of a hypercyclic operator, still on the Fréchet space $Hol(\bb C)$, was given by \textit{MacLane} in 1952 \cite{MacLane1952}. Let $D:Hol(\bb C)\longrightarrow Hol(\bb C)$ be the differentiation operator defined by $Df=f'$. Then $D$ is hypercyclic on $Hol(\bb C)$ as well.
\end{example}

\begin{example} On Banach spaces, the first known example is provided by \textit{Rolewicz} in 1969 \cite{Rolewicz1969}. Let $X=\ell^p(\bb N)$, $1\le p<+\infty$, or $X=c_0(\bb N)$. Let $T$ be the backward shift operator on $X$ defined by
    \[\begin{array}{rccc}
        T:&X&\longrightarrow& X\\
        &(x_0,x_1,\dots)&\longmapsto& (x_1,x_2,\dots)
    \end{array}\]
    Then $\lambda T$ is hypercyclic on $X$ if and only if $|\lambda|>1$.
\end{example}
Inspired by these early examples, researchers in the 1980s started studying the dynamical behavior of general linear operators, paying special attention to the phenomenon of hypercyclicity. This marked the beginning of a systematic exploration of “chaotic” behavior in linear settings. Some classical references for this area of research include:
\begin{itemize}
    \item K. G. Erdmann - A. Peris, ``Linear Chaos'', 2011
    \cite{GrosseErdmannPeris2011}; 
    \item F. Bayart - E. Matheron, ``Dynamics of linear operators'', 2009 \cite{BayartMatheron2009}.
\end{itemize}
In particular, these references contain all the results discussed in this section, and we draw on them as a source of inspiration for this brief introduction to hypercyclicity.

\subsection{Birkhoff's transitivity theorem and Godefroy Shapiro criterion} The most useful characterization of hypercyclicity is an application of the Baire category theorem and is due to Birkhoff through the notion of topological transitivity. We first recall this notion.

\begin{definition}
Let $X$ be a Fréchet space and $T\in \mathscr{L}(X)$. The operator $T$ is called \emph{topologically transitive} if for each pair of non-empty open sets $(U,V)$ of $X$, there exists $n\in\bb N$ such that $T^n(U)\cap V\neq\varnothing$.
\end{definition}

\begin{theorem}[Birkhoff's transitivity theorem, 1922 \cite{Birkhoff1922}]
Let $X$ be a separable Fréchet space and $T\in \mathscr{L}(X)$. The following assertions are equivalent.
\begin{enumerate}[(i)]
    \item $T$ is hypercyclic;
    \item $T$ is topologically transitive.
\end{enumerate}
\end{theorem}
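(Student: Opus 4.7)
The plan is to prove the two implications separately, both relying on standard but distinct ideas.

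For (i)$\Rightarrow$(ii), I would start from a hypercyclic vector $x\in HC(T)$ and, given a pair of non-empty open sets $U,V\subseteq X$, use density of the orbit to pick an index $n\ge 0$ with $T^n x\in U$. I then want some $m\ge n$ with $T^m x\in V$, because then $T^{m-n}(T^n x)=T^m x\in T^{m-n}(U)\cap V$, which would prove topological transitivity. The existence of such an $m$ amounts to saying that the tail $\{T^k x : k\ge n\}$ is dense in $X$. I would establish this by removing the finitely many points $T^0 x,\dots,T^{n-1}x$ from the orbit and noting that a Hausdorff space without isolated points admits no non-empty open set contained in a finite set; since any non-trivial Fréchet space has no isolated points, the tail remains dense (the trivial case $X=\{0\}$ is immediate).

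For (ii)$\Rightarrow$(i), I would argue by a Baire category argument. Since $X$ is separable and its topology is generated by a countable family of seminorms, it is metrizable, so it admits a countable base $\{V_k\}_{k\ge 1}$ of non-empty open sets. The key observation is the identity
\[
HC(T)=\bigcap_{k\ge 1}\bigcup_{n\ge 0}T^{-n}(V_k),
\]
which follows directly from the fact that the orbit of $x$ is dense if and only if it meets every basic open set $V_k$. Each $T^{-n}(V_k)$ is open by continuity of $T^n$, so each $\bigcup_{n}T^{-n}(V_k)$ is open. Topological transitivity translates into the statement that each such union is also dense: for every non-empty open $U$, there exists $n$ with $T^n(U)\cap V_k\neq\varnothing$, i.e.\ $U\cap T^{-n}(V_k)\neq\varnothing$. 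Since $X$ is a Fréchet space, the Baire category theorem applies, and $HC(T)$ is a dense $G_\delta$; in particular, it is non-empty, so $T$ is hypercyclic.

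I expect the main (minor) obstacle to be the tail-density step in direction (i)$\Rightarrow$(ii); the reverse direction is essentially routine once the formula for $HC(T)$ is written down. A pleasant by-product of the Baire argument is that it yields more than the statement requires: whenever $T$ is hypercyclic, $HC(T)$ is in fact a dense $G_\delta$ subset of $X$, a refinement worth recording for later use.
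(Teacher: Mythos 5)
Your proposal is correct and follows essentially the same route as the paper: the forward implication via density of the orbit's tail (using that a Fréchet space has no isolated points), and the converse via the identity $HC(T)=\bigcap_{k}\bigcup_{n}T^{-n}(V_k)$ combined with the Baire category theorem. The only addition is your explicit remark that $HC(T)$ is a dense $G_\delta$, which the paper's proof also yields implicitly.
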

\begin{figure}[ht]
\begin{tikzpicture}
\draw(0,0)node{\includegraphics[scale=1]{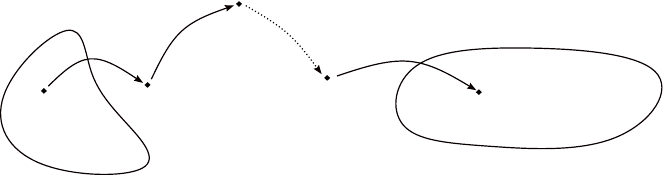}};
\draw(-4,-1)node{$U$};
\draw(4.5,-.5)node{$V$};
\draw(-4.9,.2)node{$x$};
\draw(-3.1,-.3)node{$Tx$};
\draw(-1.5,1.7)node{$T^2x$};
\draw(0,-0.15)node{$T^{n-1}x$};
\draw(3,-0.35)node{$T^nx$};
\end{tikzpicture}
    \caption{}
    \label{fig:Birkhoff}
\end{figure}
\begin{proof}
$(i)\implies (ii)$ : Assume that $T$ is hypercyclic and let $x$ be a hypercyclic vector for $T$. Consider $U$ and $V$ be two non-empty open sets of $X$. Since $orb(T,x)$ is dense in $X$, there is some $k\ge0$ such that $T^kx\in U$. But the space $X$ has no isolated points, and then the set $\{T^mx\,;\,m\ge k\}$ is also dense in $X$. Hence there exists $m\ge k$ such that $T^mx\in V$. Observe now that 
\[T^mx=T^{m-k}(T^kx)\in  T^{m-k}(U)\cap V,\]
meaning that $T^n(U)\cap V\neq\varnothing$ with $n=m-k$. Thus $T$ is topologically transitive.

$(ii)\implies (i)$ : Assume now on the contrary that $T$ is topologically transitive and let us prove that $T$ is hypercyclic. Since $X$ has a countable dense set $\{y_j\,;\,j\ge1\}$ (remember that $X$ is separable), then the open balls of radius $1/m$ around the $y_j,~j,m\ge1$, form a countable base $(V_k)_{k\ge1}$ of the topology of $X$. Hence, $x\in HC(T)$ if and only if for every $k\ge1$, there is some $n\ge0$ such that $T^nx\in V_k$. In other words
\[HC(T)=\bigcap_{k\ge1}\bigcup_{n\ge0}T^{-n}(V_k).\]
By continuity of $T$, the sets 
\[W_k=\bigcup_{n\ge0}T^{-n}(V_k)\]
are open sets for every $k\ge1$. Let's prove that $W_k$ is dense for every $k\ge1$.\\
Let $U$ be an open set of $X$. Since $T$ is topologically transitive, there exists $n\ge0$ such that \[T^n(U)\cap V_k\neq\varnothing\iff U\cap T^{-n}(V_k)\neq\varnothing.\]
Therefore $W_k$ is dense in $X$. By Baire category theorem, the set $HC(T)=\bigcap\limits_{k\ge1}W_k$ is dense in $X$ and in particular, $HC(T)\neq\varnothing$.
\end{proof}
\begin{remark}\label{Rk:EquivHcInv}
Let $T\in \mathscr{L}(X)$ and assume that $T$ is invertible. Then $T$ is hypercyclic if and only if $T^{-1}$ is hypercyclic. Indeed it is sufficient to observe that 
\[T^n(U)\cap V\neq\varnothing\iff U\cap T^{-n}(V)\neq\varnothing\]
and to apply Birkhoff theorem.
\end{remark}
As was observed by Godefroy and Shapiro, it turns out that if an operator has sufficiently eigenvectors, then it is hypercyclic.
\begin{theorem}[Godefroy - Shapiro citerion, 1991 \cite{GodefroyShapiro1991}]\customlabel{GScrit}{Godefroy - Shapiro Criterion}
Let $X$ be a separable Fréchet space and $T\in \mathscr{L}(X)$. Suppose that
\[H_-(T)=\spa\big(\ker(T-\lambda)\,;\,|\lambda|<1\big)~\text{and}~H_+(T)=\spa\big(\ker(T-\lambda)\,;\,|\lambda|>1\big)\]
are dense in $X$. Then $T$ is hypercyclic on $X$.
\end{theorem}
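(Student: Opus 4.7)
The plan is to verify the hypothesis of Birkhoff's transitivity theorem. So fix two non-empty open sets $U,V\subset X$. The goal is to produce an integer $n\ge 0$ with $T^n(U)\cap V\neq\varnothing$. The key idea is that eigenvectors associated with eigenvalues of modulus $<1$ are driven to $0$ by $T^n$, while eigenvectors associated with eigenvalues of modulus $>1$ have iterated preimages (along their own eigenline) that tend to $0$; combining these two behaviors, one can simultaneously start near a prescribed point of $U$ and land near a prescribed point of $V$.

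More precisely, by density of $H_-(T)$ and $H_+(T)$, I would pick $u\in U\cap H_-(T)$ and $v\in V\cap H_+(T)$. By definition of the span, one can write
\[
u=\sum_{i=1}^p a_i x_i,\qquad v=\sum_{j=1}^q b_j y_j,
\]
where $Tx_i=\lambda_i x_i$ with $|\lambda_i|<1$ and $Ty_j=\mu_j y_j$ with $|\mu_j|>1$ (in particular $\mu_j\neq 0$, so $\mu_j^{-n}$ makes sense). Set
\[
w_n:=\sum_{j=1}^q b_j \mu_j^{-n} y_j,
\]
so that by linearity $T^n w_n=v$ for every $n\ge 0$. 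Then for each $n\ge 0$ I consider the vector $u+w_n$ and observe that
\[
T^n(u+w_n)=\sum_{i=1}^p a_i \lambda_i^n x_i+v.
\]

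The main (and only) obstacle is to ensure that both $u+w_n$ stays close to $u$ and that $T^n(u+w_n)$ stays close to $v$, uniformly along the finitely many eigendirections in play. For this, I would fix any continuous seminorm $\rho$ on $X$ generating the Fréchet topology on a neighborhood of $u$ (resp.\ $v$) and use the triangle inequality together with $|\lambda_i|^n\to 0$ and $|\mu_j|^{-n}\to 0$ as $n\to\infty$. Since the sums are finite, choosing $n$ large enough simultaneously forces $\rho(u+w_n-u)=\rho(w_n)$ and $\rho(T^n(u+w_n)-v)=\rho\bigl(\sum_i a_i \lambda_i^n x_i\bigr)$ to be arbitrarily small. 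Therefore, for $n$ sufficiently large, $u+w_n\in U$ and $T^n(u+w_n)\in V$, which gives $T^n(U)\cap V\neq\varnothing$. Birkhoff's theorem then yields the hypercyclicity of $T$.
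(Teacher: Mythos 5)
Your proposal is correct and follows essentially the same route as the paper's proof: verify topological transitivity via Birkhoff's theorem by picking finite eigenvector combinations $u\in U\cap H_-(T)$ and $v\in V\cap H_+(T)$, noting that $T^nu\to0$ and that the backward-corrected vectors $w_n\to0$ satisfy $T^nw_n=v$, and concluding for $n$ large. The seminorm remark is just an explicit spelling-out of the openness argument the paper leaves implicit.
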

\begin{proof}
We shall prove that $T$ is topologically transitive. Let $U$ and $V$ be two non-empty open subsets of $X$. By assumption, $U\cap H_-(T)\neq\varnothing$ and $V\cap H_+(T)\neq\varnothing$. In other words, we can find
\[x=\sum_{k=1}^ma_kx_k\in U\quad \text{and}\quad y=\sum_{k=1}^mb_ky_k\in V\]
with $Tx_k=\lambda_kx_k$ with $|\lambda_k|<1$, $Ty_k=\mu_ky_k$ with $|\mu_k|>1$ and $a_k,b_k\in\bb C,~1\le k\le m$. 
Observe that
\[T^nx=\sum_{k=1}^ma_k\lambda_k^nx_k\xrightarrow[n \to \infty]{}0\]
and
\[u_n=\sum_{k=1}^mb_k\frac1{\mu_k^n}y_k\xrightarrow[n \to \infty]{}0\quad\text{with}\quad T^nu_n=y,~n\ge 0.\]
Taking account of the fact that $U$ and $V$ are open subsets, it follows that there is some $N\in\bb N$ so that, for all $n\ge N$,
\[x+u_n\in U\quad \text{and}\quad y+T^nx\in V.\]
In other words, $V\cap T^n(U)\neq\varnothing$ meaning that $T$ is topologically transitive. According to Birkhoff's theorem, we then deduce that $T$ is hypercyclic.
\end{proof}
\subsection{Some restrictions to hypercyclicity}
We assume now that $X$ is a \textit{complex separable Banach space}.
\begin{proposition}\label{prop:spectre-ponctuel-adjoint-vide}
Let $T\in \mathscr{L}(X)$. If $T$ is hypercyclic on $X$, then $\sigma_p(T^*)=\varnothing$.
\end{proposition}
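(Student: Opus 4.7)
The plan is a short proof by contradiction. Suppose, for the sake of contradiction, that $\sigma_p(T^*) \neq \varnothing$. Then there exist $\lambda \in \mathbb{C}$ and a non-zero continuous linear functional $x^* \in X^*$ with $T^* x^* = \lambda x^*$. The goal is to show that this forbids $T$ from having a dense orbit.

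To exploit the eigenvector relation, I would let $x \in HC(T)$ be a hypercyclic vector for $T$ and compute the scalar orbit obtained by evaluating $x^*$ along the orbit of $x$. By induction, $(T^*)^n x^* = \lambda^n x^*$, hence
\[
\langle T^n x, x^*\rangle = \langle x, (T^*)^n x^*\rangle = \lambda^n \langle x, x^*\rangle, \qquad n \geq 0.
\]
Since $x^*$ is a non-zero continuous linear functional on the complex Banach space $X$, it is surjective onto $\mathbb{C}$ and in particular continuous, so $x^*(orb(T,x))$ must be dense in $\mathbb{C}$.

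The conclusion then follows from the elementary observation that for any fixed $c = \langle x, x^*\rangle \in \mathbb{C}$ and any $\lambda \in \mathbb{C}$, the set $\{\lambda^n c : n\geq 0\}$ cannot be dense in $\mathbb{C}$. Indeed, if $c = 0$ the set reduces to $\{0\}$; if $|\lambda| \leq 1$ the set is bounded, hence not dense; if $|\lambda| > 1$ then either $c = 0$ (already handled) or $|\lambda^n c| \to \infty$, so $0$ is not a limit point of the set. In every case density in $\mathbb{C}$ fails, contradicting the density of $orb(T,x)$ and the continuity of $x^*$.

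I do not anticipate a real obstacle here; the only subtlety is remembering to check all the regimes for $\lambda$ (including $|\lambda|=1$, where the orbit lies on a single circle) and the degenerate case $\langle x, x^*\rangle = 0$. The role of the assumption that $X$ is complex is crucial: over $\mathbb{R}$, one could not reduce density in $\mathbb{C}$ to the contradiction above, and over $\mathbb{C}$ the fact that a non-zero functional is surjective is what converts density of the orbit into density of a complex geometric sequence.
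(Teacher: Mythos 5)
Your proposal is correct and follows essentially the same route as the paper: evaluate the functional along the orbit via $\langle T^n x, x^*\rangle = \lambda^n\langle x, x^*\rangle$ and observe that a geometric sequence cannot be dense in $\mathbb C$, contradicting the density of the image of the orbit under the non-zero continuous functional $x^*$. The only difference is that you spell out the case analysis on $|\lambda|$ and $\langle x,x^*\rangle$, which the paper leaves as ``clearly not dense.''
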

\begin{proof}
Let $x\in HC(T)$ and argue by absurd assuming that $T^*(x^*)=\mu x^*$ for some $\mu\in\bb C$ and $x^*\in X^*,\,x^*\neq0$. Then, we have, for every $n\in \bb N$ 
\[\ps{x^*,T^nx}=\ps{T^{*n}x^*,x}=\mu^n\ps{x^*,x}.\]
Observe that the set $\{\ps{x^*,T^nx}\,;\,n\ge0\}$ is dense in $\bb C$ whereas clearly the set  $\{\mu^n\ps{x^*,x}\,;\,n\ge0\}$ is not dense. This yields the desired contradiction.
\end{proof}
\begin{proposition}\label{Prop:HCcn}
Let $T\in \mathscr{L}(X)$.
\begin{enumerate}[(a)]
    \item If $\|T\|\le1$ then $T$ is not hypercyclic on $X$.
    \item If for every $x\in X$, $\|Tx\|\ge\|x\|$, then $T$ is not hypercyclic on $X$.
\end{enumerate}
\end{proposition}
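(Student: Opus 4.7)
My plan is to treat both parts by exploiting the growth (or decay) constraints that the hypotheses impose on orbits, and then to observe that such constraints are incompatible with density.

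For part (a), the idea is that the hypothesis $\|T\|\le 1$ forces every orbit to be bounded. More precisely, for any $x\in X$ and any $n\ge 0$, I would write
\[
\|T^nx\|\le\|T\|^n\|x\|\le\|x\|,
\]
so the orbit $\mathrm{orb}(T,x)$ is entirely contained in the closed ball of radius $\|x\|$ centered at $0$. Since $X$ is an infinite-dimensional (or at least non-trivial) Banach space, such a closed ball is never dense in $X$; thus no $x$ can be a hypercyclic vector and $T$ is not hypercyclic.

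For part (b), the symmetric idea works: iterating the lower bound $\|Ty\|\ge\|y\|$ with $y=T^{n-1}x$ gives
\[
\|T^nx\|\ge\|T^{n-1}x\|\ge\cdots\ge\|x\|,\qquad n\ge 0.
\]
If $x\ne 0$, then the orbit of $x$ is contained in the closed set $\{y\in X:\|y\|\ge\|x\|\}$, which is bounded away from $0$; hence its closure cannot equal $X$, since it misses the open ball of radius $\|x\|$ around $0$. The case $x=0$ is trivial because $\mathrm{orb}(T,0)=\{0\}$, which is not dense in the non-trivial Banach space $X$. Therefore no hypercyclic vector exists.

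There is no real obstacle here; the only delicate point is to notice that we should \emph{not} try to reduce (b) to (a) by passing to $T^{-1}$ and invoking \Cref{Rk:EquivHcInv}, because the hypothesis $\|Tx\|\ge\|x\|$ only gives that $T$ is bounded below and injective, not surjective, so $T$ need not be invertible. The direct argument above bypasses this issue entirely by working with the orbit itself rather than with an inverse.
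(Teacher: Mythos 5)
Your proof is correct and follows essentially the same route as the paper: in (a) the orbit is bounded and hence not dense, and in (b) the orbit of a nonzero vector is bounded away from the origin and hence not dense. The additional remarks (the trivial case $x=0$ and the warning against reducing (b) to (a) via an inverse) are sound but not needed beyond the paper's argument.
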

\begin{proof}
\textit{(a)} If $\|T\|\le1$, then for every $x\in X$ and every $n\ge1$, we have $\|T^nx\|\le\|x\|$. Hence the orbit $orb(T,x)$ is bounded, and then cannot be dense in $X$. Since this is true for every $x\in X$, then $T$ is not hypercyclic.

\textit{(b)} If for every $x\in X$, $\|Tx\|\ge\|x\|$, then for every $x\in X$ and every $n\ge0$, we have $\|T^nx\|\ge\|x\|$.  Hence the orbit $orb(T,x)$ stays away from $0$ (if $x\neq0$) and then cannot be dense in $X$. One more time, $T$  cannot be hypercyclic on $X$.
\end{proof}

In the context of Hilbert spaces, we will show that a hyponormal operator cannot be hypercyclic.
\begin{definition}
Let $H$ be a Hilbert space and $T\in \mathscr{L}(H)$. We say that $T$ is \textit{hyponormal} if
$T^*T-TT^*\ge0$, that is 
\begin{equation}\label{Eq1}
\forall x\in H,\quad\|Tx\|\ge\|T^*x\|.
\end{equation}
\end{definition}
\begin{fact}\label{Fact:Hypo}
If $T$ is hyponormal, then we have
\[\forall x\in H,\quad \|Tx\|^2\le\|T^2x\|\|x\|.\]
\end{fact}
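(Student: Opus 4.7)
The plan is to write $\|Tx\|^2$ as an inner product and then apply two standard inequalities in sequence: Cauchy--Schwarz to produce a $\|x\|$ factor, and the hyponormality hypothesis applied not to $x$ but to $Tx$, in order to convert a $T^*T$ term into a $T^2$ term.

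Concretely, I would start from the identity
\[
\|Tx\|^2=\langle Tx,Tx\rangle=\langle T^*Tx,x\rangle,
\]
and use Cauchy--Schwarz in $H$ to get $\|Tx\|^2\le\|T^*Tx\|\,\|x\|$. Then, since hyponormality is the pointwise inequality $\|Ty\|\ge\|T^*y\|$ for every $y\in H$, I would apply it to the vector $y=Tx$, which yields $\|T^*(Tx)\|\le\|T(Tx)\|=\|T^2x\|$. Chaining the two bounds produces
\[
\|Tx\|^2\le\|T^*Tx\|\,\|x\|\le\|T^2x\|\,\|x\|,
\]
which is exactly the claimed inequality.

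There is no real obstacle here: the only small subtlety is remembering to apply the hyponormality hypothesis \eqref{Eq1} to $Tx$ rather than to $x$, which is the move that replaces $T^*$ by $T$ and promotes $T$ to $T^2$. Everything else is Cauchy--Schwarz and the definition of the adjoint.
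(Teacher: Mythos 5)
Your proof is correct and coincides with the paper's argument: both pass from $\|Tx\|^2=\langle T^*Tx,x\rangle$ to $\|T^*(Tx)\|\,\|x\|$ by Cauchy--Schwarz, and then invoke hyponormality at the vector $Tx$ to bound $\|T^*(Tx)\|$ by $\|T^2x\|$. Nothing further is needed.
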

\begin{proof}
Write:
\begin{align*}
\|Tx\|^2=\ps{Tx,Tx}&=\ps{T^*Tx,x}\\
&\le\|T^*(Tx)\|\|x\|&\text{by Cauchy-Schwartz inequality}\\
&\le\|T^2x\|\|x\|&\text{by \eqref{Eq1} applied to $Tx$}.
\end{align*}
\end{proof}
\begin{theorem}\label{Theorem-Hyponormal}
Let $T\in \mathscr{L}(H)$ be hyponormal. Then $T$ is not hypercyclic on $H$.
\end{theorem}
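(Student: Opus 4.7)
The plan is to use \Cref{Fact:Hypo} to show that for any nonzero $x \in H$ the sequence $a_n := \|T^n x\|$ is so rigid that its orbit cannot be simultaneously dense near $0$ and unbounded, which would be required for density in $H$.

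First I would reduce to the nontrivial case: if $x \in HC(T)$, then $x \neq 0$, and if some $T^N x = 0$ the orbit would be finite, contradicting density; so I may assume $a_n > 0$ for every $n \ge 0$. Applying \Cref{Fact:Hypo} to $T^{n-1}x$ in place of $x$ yields
\[
\|T^n x\|^2 \le \|T^{n+1} x\| \, \|T^{n-1} x\|, \qquad n \ge 1,
\]
that is $a_n^2 \le a_{n-1} a_{n+1}$. This is log-convexity, so the ratios $r_n := a_{n+1}/a_n$ form a \emph{non-decreasing} sequence in $(0,+\infty)$. Let $r := \lim_{n\to\infty} r_n \in (0,+\infty]$.

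Next I would split into two cases based on $r$. If $r \le 1$, then $r_n \le r \le 1$ for every $n$, so $(a_n)$ is non-increasing and the orbit lies in the closed ball of radius $\|x\|$; then no element of $H$ of norm larger than $\|x\|$ lies in the closure of $\operatorname{orb}(T,x)$, contradicting hypercyclicity. If $r > 1$, there exists $N$ with $r_n > 1$ for every $n \ge N$, so $(a_n)_{n \ge N}$ is non-decreasing, which gives $a_n \ge a_N > 0$ for all $n \ge N$ and therefore $\inf_{n \ge 0} \|T^n x\| > 0$; then $0$ is not in the closure of $\operatorname{orb}(T,x)$, again contradicting hypercyclicity.

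The heart of the argument is really the log-convexity inequality, which is given to us by \Cref{Fact:Hypo}, so there is no serious technical obstacle; the only care required is the clean case split on the monotone limit $r$ and the observation that hypercyclicity demands the orbit norms get simultaneously arbitrarily small and arbitrarily large, which log-convexity forbids.
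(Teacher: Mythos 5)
Your proof is correct and follows essentially the same route as the paper: both arguments use \Cref{Fact:Hypo} to get the log-convexity $\|T^nx\|^2\le\|T^{n-1}x\|\,\|T^{n+1}x\|$ and then split into the case where the orbit norms are non-increasing (orbit bounded) and the case where they eventually increase (orbit bounded away from $0$). Your packaging via the monotone ratio sequence $r_n$ is just a cosmetic variant of the paper's induction from the first index $N$ with $\|T^{N+1}x\|>\|T^Nx\|$.
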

\begin{proof}
Let $x\in H,~x\neq0$.

Assume first that the sequence $(\|T^nx\|)_{n\ge0}$ is decreasing. Thus $\|T^nx\|\le\|x\|$ for all $n\ge0$ and $x$ cannot be a hypercyclic vector.

Now, it remains to prove that when the sequence $(\|T^nx\|)_{n\ge0}$ is not decreasing, then the vector $x$ cannot be hypercyclic as well. 

By assumption, there exists $N\in\bb N$ such that $\|T^{N+1}x\|>\|T^Nx\|$. In particular, it implies that $T^{N+1}x\neq0$ and then $T^Nx\neq0$. By Fact \ref{Fact:Hypo} applied to $T^Nx$, we obtain $\|T^{N+1}x\|^2\le\|T^{N+2}x\|\|T^Nx\|$, whence
    \[\|T^{N+2}x\|\ge\frac{\|T^{N+1}x\|^2}{\|T^Nx\|}=\|T^{N+1}x\|\,\underset{>1}{\underbrace{\frac{\|T^{N+1}x\|}{\|T^Nx\|}}}>\|T^{N+1}x\|.\]
    An induction shows that $(\|T^nx\|)_{n\ge N}$ is strictly increasing and in particular $\|T^nx\|>\|T^Nx\|,~\forall n\ge N+1$. Hence $x$ cannot be a hypercyclic vector.
\end{proof}

We finish this first lecture (section) by giving some spectral restrictions.
\begin{theorem}\label{Th:HcIntSp}
Let $X$ be a complex separable Banach space and $T\in \mathscr{L}(X)$. Assume that $T$ is hypercyclic. Then $\sigma(T)\cap\bb T\neq\varnothing$.
\end{theorem}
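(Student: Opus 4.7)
The plan is to argue by contradiction: I would assume $\sigma(T)\cap\bb T=\varnothing$ and exploit a Riesz spectral decomposition to split $X$ into two $T$-invariant subspaces on which hypercyclic orbits cannot exist, for opposite reasons mirroring the two parts of \Cref{Prop:HCcn}.

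First, since $\sigma(T)$ is compact and avoids $\bb T$, I would write $\sigma(T)=\sigma_1\sqcup\sigma_2$ with $\sigma_1\subset\bb D$ and $\sigma_2\subset\{|z|>1\}$ (one possibly empty). The holomorphic functional calculus then provides a Riesz projection $P=\frac{1}{2\pi i}\int_\gamma(\zeta-T)^{-1}\,d\zeta$, for a contour $\gamma$ enclosing $\sigma_1$ inside $\bb D$, which commutes with $T$ and yields a topological direct sum $X=X_1\oplus X_2$ into closed $T$-invariant subspaces such that $T_i:=T_{|X_i}$ has spectrum $\sigma_i$. Let $P_i:X\to X_i$ be the associated continuous projections.

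Next, I would analyze each summand separately. Since $r(T_1)<1$, Gelfand's formula yields $\|T_1^n\|\to 0$, so every orbit of $T_1$ is bounded and tends to $0$. Since $0\notin\sigma_2$, the operator $T_2$ is invertible with $\sigma(T_2^{-1})\subset\bb D$, so $\|T_2^{-n}\|\to 0$; consequently, for any nonzero $y\in X_2$, the inequality $\|y\|\le\|T_2^{-n}\|\,\|T_2^ny\|$ forces $\|T_2^ny\|\to+\infty$.

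Finally, to derive a contradiction I would take $x\in HC(T)$, decompose $x=x_1+x_2$ with $x_i\in X_i$, and observe that since $P_i$ is continuous and satisfies $P_iT=T_iP_i$, the set $orb(T_i,x_i)=P_i(orb(T,x))$ is dense in $X_i$. If $X_1\ne\{0\}$ this contradicts the fact that the orbits of $T_1$ are bounded (in the spirit of \Cref{Prop:HCcn}(a)); if $X_2\ne\{0\}$, then either $x_2\ne 0$ and the relation $\|T_2^nx_2\|\to+\infty$ forbids density in $X_2$, or $x_2=0$ and the orbit is trivially not dense. Hence $X_1=X_2=\{0\}$, which is absurd since a hypercyclic operator lives on a nonzero space. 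The main obstacle is the Riesz decomposition step; once it is in hand, the rest is a clean globalization of the norm-based obstructions in \Cref{Prop:HCcn}.
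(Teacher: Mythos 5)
Your proposal is correct and follows essentially the same route as the paper: argue by contradiction, use the Riesz decomposition $X=X_1\oplus X_2$ according to the parts of the spectrum inside and outside $\overline{\bb D}$, and rule out each summand via the norm obstructions of \Cref{Prop:HCcn}. The only (minor) difference is in the exterior part: the paper invokes \Cref{Rk:EquivHcInv} to transfer hypercyclicity to $T^{-1}$ and reduce to the first case, whereas you argue directly that $\|T_2^{n}y\|\to+\infty$ via $\|y\|\le\|T_2^{-n}\|\,\|T_2^{n}y\|$; both are equally valid.
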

\begin{proof}
We argue by absurd and suppose that $\sigma(T)\cap\bb T=\varnothing$. 

If $\sigma(T)\subset\bb D$ then by the spectral radius formulae $r(T)<1$ and there exists $\varepsilon>0$ and $M>0$ such that 
\[\|T^n\|\le M(1-\varepsilon)^n\xrightarrow[n \to \infty]{}0,\]
which contradicts the fact that $T$ is hypercyclic.

If $\sigma(T)\subset\bb C\setminus\overline{\bb D}$, then $T$ is invertible and $T^{-1}$ is also hypercyclic. But since $\sigma(T^{-1})=\sigma(T)^{-1}\subset\bb D$, we also get a contradiction.

Therefore $\sigma_1=\sigma(T)\cap\bb D$ and $\sigma_2=\sigma(T)\cap\big(\bb C\setminus\overline{\bb D}\big)$ form a partition of $\sigma(T)$ into two non empty closed sets. By the Riesz decomposition theorem, there exists two non-trivial closed invariant subspaces $M_1$ and $M_2$ such that 
\[X=M_1\oplus M_2\quad\text{and}\quad\sigma(T_{|M_j})=\sigma_j,\,1\le j\le 2.\]
But it is not difficult to see that $T_{|M_1}$ should be hypercyclic, which is again a contradiction because $\sigma(T_{|M_1})=\sigma_1\subset\bb D$.
\end{proof}
Kitai proved indeed a better result.
\begin{theorem}[Kitai, 1982 \cite{Kitai1982}]
Let $X$ be a complex separable Banach space and $T\in \mathscr{L}(X)$. Assume that $T$ is hypercyclic. Then every connected component of $\sigma(T)$ intersects $\bb T$.
\end{theorem}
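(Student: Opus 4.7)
The plan is to argue by contradiction in the same spirit as the proof of \Cref{Th:HcIntSp}, but using a sharper topological separation to isolate a single connected component rather than only the two halves of the spectrum. Suppose there exists a connected component $K$ of $\sigma(T)$ such that $K\cap\bb T=\varnothing$. Since $\bb D$ and $\bb C\setminus\overline{\bb D}$ are disjoint open sets whose union is $\bb C\setminus\bb T$, and $K$ is connected, $K$ lies entirely in one of them; call this open set $\Omega$ (so $\Omega=\bb D$ or $\Omega=\bb C\setminus\overline{\bb D}$).

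The key step, and the main technical ingredient beyond what is used in \Cref{Th:HcIntSp}, is the following classical fact about compact Hausdorff spaces: a connected component coincides with the intersection of all clopen sets containing it (quasi-component = component), and consequently, for any open set $U$ of $\bb C$ containing $K$ there is a clopen subset $\sigma_1$ of $\sigma(T)$ with $K\subseteq\sigma_1\subseteq U$. Applying this to $U=\Omega$, I obtain a clopen partition $\sigma(T)=\sigma_1\sqcup\sigma_2$ with $\varnothing\neq\sigma_1\subseteq\Omega$. The Riesz decomposition theorem then furnishes a direct-sum decomposition $X=M_1\oplus M_2$ into non-trivial closed $T$-invariant subspaces (we may assume $M_2\neq\{0\}$, otherwise $\sigma(T)=\sigma_1\subseteq\Omega$ and we conclude exactly as in the proof of \Cref{Th:HcIntSp}), with $\sigma(T_{|M_j})=\sigma_j$ for $j=1,2$.

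Next I transfer hypercyclicity to $M_1$. Let $P_1\in\mathscr{L}(X)$ denote the projection onto $M_1$ along $M_2$; it is continuous and commutes with $T$. For any hypercyclic vector $x\in HC(T)$, set $x_1=P_1x$. Given any non-empty open set $U$ of $M_1$, the preimage $P_1^{-1}(U)$ is open in $X$, hence some $T^n x$ lies in it, which gives $T^n x_1=P_1T^n x\in U$. Therefore $x_1$ is a hypercyclic vector for $T_{|M_1}$.

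Finally I derive a contradiction. If $\Omega=\bb D$, then $\sigma(T_{|M_1})=\sigma_1\subseteq\bb D$, so the spectral radius formula gives $r(T_{|M_1})<1$ and hence $\|T^n_{|M_1}\|\to 0$, which forbids hypercyclicity as in \Cref{Prop:HCcn}. If instead $\Omega=\bb C\setminus\overline{\bb D}$, then $0\notin\sigma(T_{|M_1})$ so $T_{|M_1}$ is invertible, and by \Cref{Rk:EquivHcInv} its inverse is also hypercyclic; but $\sigma((T_{|M_1})^{-1})=\sigma_1^{-1}\subseteq\bb D$, so we fall back into the previous case and obtain the same contradiction. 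The only step requiring real care beyond \Cref{Th:HcIntSp} is the quasi-component argument producing $\sigma_1$; the rest is a straightforward adaptation of that earlier proof.
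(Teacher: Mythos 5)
Your proof is correct. Note, however, that the paper does not actually prove Kitai's theorem: it states it without proof, immediately after proving the weaker \Cref{Th:HcIntSp} (that $\sigma(T)$ meets $\bb T$), whose argument splits $\sigma(T)$ along $\bb T$ into $\sigma(T)\cap\bb D$ and $\sigma(T)\cap(\bb C\setminus\overline{\bb D})$ and leaves the transfer of hypercyclicity to $M_1$ as ``not difficult to see''. Your argument supplies exactly the two ingredients needed to upgrade that proof to the full statement. First, the \v{S}ura-Bura lemma (components equal quasi-components in a compact Hausdorff space, hence a component contained in an open set $\Omega$ can be enclosed in a clopen subset $\sigma_1\subseteq\Omega$) replaces the crude dichotomy of \Cref{Th:HcIntSp}; this is the genuinely new idea, and you apply it correctly, including the degenerate case $\sigma_2=\varnothing$. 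Second, you make explicit the quasi-factor argument that the paper glosses over: since the Riesz projection $P_1$ is continuous, commutes with $T$, and restricts to the identity on $M_1$, the image $P_1x$ of a hypercyclic vector is hypercyclic for $T_{|M_1}$ (with $M_1$ separable as a closed subspace of $X$, and $M_1\neq\{0\}$ because $\sigma(T_{|M_1})=\sigma_1\neq\varnothing$). The final contradiction via the spectral radius formula when $\sigma_1\subseteq\bb D$, and via \Cref{Rk:EquivHcInv} and $\sigma\big((T_{|M_1})^{-1}\big)=\sigma_1^{-1}$ when $\sigma_1\subseteq\bb C\setminus\overline{\bb D}$, matches \Cref{Th:HcIntSp} and \Cref{Prop:HCcn} exactly. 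In short: correct, complete, and precisely the intended strengthening of the paper's argument.
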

\section{Toeplitz operators and Hardy spaces}\label{Section:Toeplitz}
\subsection{Toeplitz matrices}
Recall that a Toeplitz matrix on $\ell^2(\bb N)$ is a matrix of the form
\[T(a)~=~
\begin{pmatrix}
 a_0    & a_{-1} & a_{-2} & a_{-3} & \cdots \\
 a_1    & a_0    & a_{-1} & a_{-2} & \cdots \\
 a_2    & a_1    & a_0    & a_{-1} & \cdots \\
 a_3    & a_2    & a_1    & a_0    & \cdots \\
 \vdots & \vdots & \vdots & \vdots & \ddots
\end{pmatrix}
\]
where $a=(a_n)_{n\in\bb Z}$ is a doubly infinite sequence of complex numbers. In other words $A=(a_{j,k})_{j,k\ge0}$ is a Toeplitz matrix $T(a)$ if $a_{j,k}=a_{j-k}$, for every $j,k\ge0$. To study boundedness of a Toeplitz matrix on $\ell^2(\bb N)$, it will be useful to adopt another point of view. 

Let $L^2(\bb T)$ be the Hilbert space of Lebesgue measurable function on the unit circle $\bb T$ equipped with the inner product
\[\ps{f,g}_2~=~\int_0^{2\pi}f(e^{i\theta})\overline{g(e^{i\theta})}\frac{\mathrm d\theta}{2\pi}.\]
Let $H^2=\left\{f\in L^2(\bb T)\,;\,\hat f(n)=0,\,\forall n<0\right\}$ be the Hardy space. It turns out that $H^2$ is a closed subspace of $L^2(\bb T)$ and we denote by $P_+:L^2(\bb T)\longrightarrow H^2$ the orthogonal projection of $L^2(\mathbb T)$ onto $H^2$, defined by
\[P_+\left(\sum_{n\in\bb Z}c_ne^{in\theta}\right)=\sum_{n\ge0}c_ne^{in\theta}.\]
Note that 
\begin{equation}\label{ker-Riesz-projection}
\mbox{ker}(P_+)=\overline{H^2_0},
\end{equation}
where $H^2_0=e^{i\theta}H^2$.
Moreover, the operator
    \[\begin{array}{rccc}
        U:~&\ell^2(\bb N)&\longrightarrow&H^2\\
        &(a_n)_{n\ge0}&\longmapsto& \displaystyle\sum_{n\ge0}a_ne^{in\theta}
    \end{array}\]
is a unitary operator.

Hartman and Wintner in 1954 got a characterization of the Toeplitz matrices that give rise to bounded operators on $\ell^2(\bb N)$.
\begin{theorem}[Hartman - Wintner, 1954 \cite{HartmanWintner1954}]
The following assertions are equivalent:
\begin{enumerate}[(i)]
    \item The operator $T(a)$ is bounded on $\ell^2(\bb N)$;
    \item There exists a function $\phi\in L^\infty(\bb T)$ such that $a_n=\widehat{\phi}(n),~\forall\,n\in\bb Z$.
\end{enumerate}
Moreover, $UT(a)U^{-1}=T_\phi$, where $T_\phi f=P_+(\phi f)\,f\in H^2$.
\end{theorem}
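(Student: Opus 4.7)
The direction $(ii)\Rightarrow(i)$, together with the identity $UT(a)U^{-1}=T_\phi$, is the easy part. For $\phi\in L^\infty(\bb T)$, the operator $T_\phi:f\mapsto P_+(\phi f)$ satisfies $\|T_\phi f\|_2\le\|\phi\|_\infty\|f\|_2$, so it is bounded on $H^2$. Computing its matrix in the orthonormal basis $(e^{in\theta})_{n\ge 0}$ gives
\[
\ps{T_\phi e^{ik\theta},e^{ij\theta}}_{H^2} = \ps{\phi e^{ik\theta},e^{ij\theta}}_{L^2} = \hat\phi(j-k),
\]
which is exactly the Toeplitz structure with $a_n=\hat\phi(n)$. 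Hence $U^{-1}T_\phi U=T(a)$, proving both $(ii)\Rightarrow(i)$ and the claimed identity once $\phi$ has been produced.

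For the substantive direction $(i)\Rightarrow(ii)$, my plan is to manufacture $\phi$ by Abel regularisation followed by a weak-$\ast$ compactness argument. First, $|a_n|\le\|T(a)\|$ for every $n\in\bb Z$ (just pair standard basis vectors of $\ell^2(\bb N)$), so for each $0<r<1$ the series
\[
\phi_r(e^{i\theta})=\sum_{n\in\bb Z}a_n\, r^{|n|}\,e^{in\theta}
\]
converges absolutely and defines a continuous function on $\bb T$.

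The key analytic step is the uniform bound $\|\phi_r\|_\infty\le\|T(a)\|$. For $\zeta\in\bb D$ set
\[
v_\zeta=\sqrt{1-|\zeta|^2}\,(\zeta^k)_{k\ge 0}\in\ell^2(\bb N),
\]
which is a unit vector. Expanding $\ps{T(a)v_\zeta,v_\zeta}_{\ell^2}=(1-|\zeta|^2)\sum_{j,k\ge 0}a_{j-k}\bar\zeta^{\,j}\zeta^k$, splitting the index $m=j-k$ into $m\ge 0$ and $m<0$, and summing the two resulting geometric series in $k$ (respectively in $j$) yields
\[
\ps{T(a)v_\zeta,v_\zeta}_{\ell^2}=\sum_{m\in\bb Z}a_m\, r^{|m|}\,e^{-im\alpha}=\phi_r(e^{-i\alpha}),\qquad \zeta=re^{i\alpha}.
\]
Since $\|v_\zeta\|=1$, this gives $|\phi_r(e^{-i\alpha})|\le\|T(a)\|$ for every $\alpha$, whence the claim.

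Finally, by Banach–Alaoglu the closed ball of radius $\|T(a)\|$ in $L^\infty(\bb T)=L^1(\bb T)^\ast$ is weak-$\ast$ compact and metrisable (since $L^1(\bb T)$ is separable), so along some sequence $r_k\nearrow 1$ one has $\phi_{r_k}\to\phi$ in the weak-$\ast$ topology for some $\phi\in L^\infty(\bb T)$ with $\|\phi\|_\infty\le\|T(a)\|$. Pairing against $e^{-in\theta}\in L^1(\bb T)$ gives
\[
\hat\phi(n)=\lim_k \widehat{\phi_{r_k}}(n)=\lim_k a_n r_k^{|n|}=a_n,
\]
which is exactly $(ii)$; the identity $UT(a)U^{-1}=T_\phi$ then follows by reapplying the easy direction to this $\phi$, since both sides share the matrix $(a_{j-k})_{j,k\ge 0}$ in the monomial basis. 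The main obstacle is the sup-norm estimate for $\phi_r$; the weak-$\ast$ extraction and the verification of Fourier coefficients are soft.
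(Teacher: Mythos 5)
Your proposal is correct, and it does strictly more than the paper: the paper proves only the implication $(ii)\Rightarrow(i)$ together with the identity $UT(a)U^{-1}=T_\phi$ --- exactly as you do, via the estimate $\|P_+(\phi f)\|_2\le\|\phi\|_\infty\|f\|_2$ and the matrix-entry computation $\ps{T_\phi z^k,z^j}=\hat\phi(j-k)$ --- and then simply refers to the book of Martinez-Avenda\~no and Rosenthal for the converse. For the hard direction $(i)\Rightarrow(ii)$ you supply a complete and valid argument: the rearrangement giving $\ps{T(a)v_\zeta,v_\zeta}=\sum_{m}a_mr^{|m|}e^{-im\alpha}$ is legitimate because $|a_{j-k}|=|\ps{T(a)e_k,e_j}|\le\|T(a)\|$ makes the double series absolutely convergent, the resulting bound $\|\phi_r\|_\infty\le\|T(a)\|$ is the crux, and the weak-$\ast$ extraction followed by testing against characters correctly recovers $\hat\phi(n)=\lim_k a_nr_k^{|n|}=a_n$ (term-by-term integration of $\phi_r$ being justified by uniform convergence of $\sum_m a_mr^{|m|}e^{im\theta}$). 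It is worth observing that your key identity is precisely the $\ell^2$-side analogue of the Poisson-transform computation $\ps{T_\phi\widetilde{k_\lambda},\widetilde{k_\lambda}}_2=\mathcal P(\phi)(\lambda)$ that the paper uses later to prove the Brown--Halmos norm theorem; since your argument also yields $\|\phi\|_\infty\le\|T(a)\|$ by weak-$\ast$ lower semicontinuity, combined with the easy direction it delivers $\|T_\phi\|=\|\phi\|_\infty$ as a byproduct.
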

\begin{proof}[Proof of $(ii)\implies(i)$]
First note that if $\phi\in L^\infty(\bb T)$, then, for every $f\in H^2$, we have
\[\|T_\phi f\|_2=\|P_+(\phi f)\|_2\le\|\phi f\|_2\le\|\phi\|_\infty\|f\|_2.\]
Hence $T_\phi$ is bounded on $H^2$ and $\|T_\phi\|\le\|\phi\|_\infty$. 

Moreover, let $(e_n)_{n\ge0}$ be the canonical basis of $\ell^2(\bb N)$. Then, for every $n,k\ge 0$, on one hand,  we have $\ps{T(a)e_n,e_k}=a_{k-n}$,  and on the other hand, we have
\begin{align*}
\ps{U^{-1}T_\phi Ue_n,e_k}~&=~\ps{T_\phi Ue_n,e_k}\\
&=~\ps{T_\phi z^n,z^k}\\
&=~\ps{P_+(\phi z^n),z^k}\\
&=~\ps{\phi z^n,z^k}&\text{because }z^k\in H^2\\
&=~\ps{\phi,z^{k-n}}\\
&=~\widehat{\phi}(k-n)~=~a_{k-n}.
\end{align*}
Hence, for every $n,k\ge0$, we have $\ps{T(a)e_n,e_k}=\ps{U^{-1}T_\phi Ue_n,e_k}$, which proves that $T(a)=U^{-1}T_\phi U$ is bounded on $\ell^2(\bb N)$.

For the proof of $(i)\implies(ii)$, we refer to the following book: ``An introduction to operators on the Hardy-Hilbert space'' by Rubén A. Martinez-Avenda\~no and Peter Rosenthal, 2007 \cite{MartinezAvendanoRosenthalB}.
\end{proof}

We will now discuss basic properties of Toeplitz operators $T_\phi$ on $H^2$ when the symbol $\phi\in L^\infty(\bb T)$. Before, we recall some basic facts on $H^2$.
\subsection{The Hardy space $H^2$}
The space $H^2$ can be identified with the space $H^2(\bb D)$ which consists of analytic functions $f$ on the open unit disc $\bb D$, 
\[f(z)~=~\sum_{n=0}^{+\infty} a_nz^n,\qquad z\in\mathbb D,\]
where Taylor coefficients $(a_n)_{n\ge0}\in\ell^2(\bb N)$. The space $H^2(\bb D)$ is equipped with the following scalar product
\begin{equation}\label{defn-inner-product-hardyspace}
\ps{f,g}_2~=~\sum_{n=0}^{+\infty}a_n\overline{b_n},
\end{equation}
where $f(z)=\sum_{n=0}^\infty a_n z^n$ and $g(z)=\sum_{n=0}^\infty b_n z^n$. Then the operator
    \[\begin{array}{rccc}
        U:~&H^2(\bb D)&\longrightarrow&H^2\\
        &\displaystyle f(z)=\sum_{n=0}^\infty a_n z^n&\longmapsto& \displaystyle f^*(e^{i\theta})=\sum_{n=0}^\infty a_n e^{in\theta}
    \end{array}\]
    is a unitary operator and $\widehat{f^*}(n)=a_n,\,n\ge0$.

Using Parseval equality, it is not difficult to see that if $f_r(e^{i\theta})=f(re^{i\theta})$, $0\leq r<1$, $\theta\in\bb R$, then 
\[
\lim_{r\to 1^-}\|f_r-f^*\|_{L^2(\mathbb T)}= 0.
\]
In particular, there exists a sequence $(r_n)_n\subset (0,1)$ such that $r_n\to 1$ as $n\to \infty$ and 
\[
\lim_{n\to\infty}f(r_ne^{i\theta})=f^*(e^{i\theta}),\qquad 
\text{a.e. }e^{i\theta}\in\bb T.
\]
In fact, a deep result of Fatou says much more: if $f\in H^2(\bb D)$, then
    \[nt-\!\!\lim_{z\to e^{i\theta}}f(z)~=~f^*(e^{i\theta})\qquad \text{a.e. }e^{i\theta}\in\bb T,\]
    where $nt-\lim$ denotes the non-tangential limit restricted to a Stolz region $\Delta_\alpha$.

     \begin{figure}[ht]
        \begin{tikzpicture}
        \draw(0,0)arc[start angle=-30,end angle=30,radius=2.5];
        \draw(.335,1.25)node{$\times$}(.335,1.25)node[right]{$e^{i\theta}$};
        \draw[blue,fill=cyan!20](-1,.5)--(.335,1.25)--(-1,2);
        \draw[blue,->](0,1.45)arc[start angle=160,end angle=200,radius=.57];
        \draw[blue](-.2,1.25)node{$\alpha$};
        \draw(2,1.25)node[right]{with $0<\alpha<\pi$.};
        \end{tikzpicture}
        \caption{}
        \label{Fig2}
    \end{figure}   
    
In the sequel, we will identify, as usual, $H^2$ and $H^2(\bb D)$ and omit the star in $f^*$ when we talk about the non-tangential limit of a function in $H^2(\bb D)$. The space $H^2$ is a RKHS (\emph{reproducing kernel Hilbert space}) where the kernel is given by
\[k_\lambda(z)~=~\frac1{1-\overline{\lambda}z},\quad\lambda, z\in\bb D.\]
Indeed, we have $k_\lambda(z)=\sum\limits_{n\ge0}\overline{\lambda}^nz^n,$ and $(\overline{\lambda}^n)_{n\ge0}\in \ell^2(\bb N)$. Hence the function $k_\lambda$ belongs to $H^2$. Moreover, for every $f\in H^2,\, f(z)=\sum\limits_{n\ge0}a_nz^n$, and for every $\lambda\in\bb D$, taking account of \eqref{defn-inner-product-hardyspace}, we have
\[\ps{f,k_\lambda}~=~\sum_{n=0}^\infty a_n\lambda^n~=~f(\lambda).\]
\subsection{Toeplitz operators on $H^2$}
If we come back to Toeplitz operators on $H^2$, we have the following result on the norm.
\begin{theorem}[Brown - Halmos, 1963 \cite{BrownHalmos1963}]
Let $\phi\in L^\infty(\bb T)$. Then $T_\phi\in \mathscr{L}(H^2)$ and $\|T_\phi\|=\|\phi\|_\infty.$
\end{theorem}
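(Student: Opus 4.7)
The upper bound $\|T_\phi\|\le\|\phi\|_\infty$ has already appeared inside the proof of Hartman--Wintner (direct estimate via the contractivity of $P_+$), so the plan is to concentrate entirely on the reverse inequality $\|\phi\|_\infty\le\|T_\phi\|$.

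The core idea is to reduce the norm of the Toeplitz operator on $H^2$ to that of the multiplication operator $M_\phi$ on $L^2(\mathbb T)$, whose norm is well known to equal $\|\phi\|_\infty$. The bridge is the following $z^n$-invariance: for any analytic polynomials $f,g$ and any $n\ge 0$, since $z^n f,z^n g\in H^2$ and $|z^n|=1$ on $\mathbb T$, one has
\[
\langle T_\phi(z^n f),z^n g\rangle=\langle P_+(\phi z^n f),z^n g\rangle=\langle \phi z^n f,z^n g\rangle=\langle \phi f,g\rangle,
\]
where the second equality uses that $z^n g\in H^2=(\ker P_+)^{\perp}$ by \eqref{ker-Riesz-projection}. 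Hence $|\langle \phi f,g\rangle|\le\|T_\phi\|\,\|f\|_2\|g\|_2$ for all analytic polynomials $f,g$.

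Next I would upgrade this inequality from $H^2$-test functions to all trigonometric polynomials. Given trigonometric polynomials $F,G$, choose $N$ so large that $f:=z^N F$ and $g:=z^N G$ are analytic polynomials; since multiplication by $z^N$ is an isometry of $L^2(\mathbb T)$, the previous display yields
\[
|\langle \phi F,G\rangle|=|\langle \phi f,g\rangle|\le\|T_\phi\|\,\|F\|_2\|G\|_2.
\]
By density of trigonometric polynomials in $L^2(\mathbb T)$, this extends to arbitrary $F,G\in L^2(\mathbb T)$, and taking the supremum over $G$ with $\|G\|_2\le 1$ shows that $\|\phi F\|_2\le\|T_\phi\|\,\|F\|_2$, i.e. $\|M_\phi\|_{L^2\to L^2}\le\|T_\phi\|$.

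It then remains to invoke the standard fact that $\|M_\phi\|_{L^2\to L^2}=\|\phi\|_\infty$; for completeness one tests $M_\phi$ on $\chi_{E_\varepsilon}/\sqrt{m(E_\varepsilon)}$ with $E_\varepsilon=\{|\phi|>\|\phi\|_\infty-\varepsilon\}$ (which has positive measure by definition of the essential supremum) to get $\|M_\phi\|\ge\|\phi\|_\infty-\varepsilon$ for every $\varepsilon>0$. Combining everything yields $\|\phi\|_\infty\le\|T_\phi\|$, hence equality. The only conceptually delicate step is the $z^n$-trick in the first display: one has to recognize that the compression by $P_+$ becomes invisible once both test vectors are shifted deep enough into $H^2$, which is exactly what allows the $H^2$-norm of $T_\phi$ to see the full $L^\infty$ norm of $\phi$.
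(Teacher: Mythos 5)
Your proof is correct, but it follows a genuinely different route from the one in the paper. The paper obtains the lower bound $\|\phi\|_\infty\le\|T_\phi\|$ by testing $T_\phi$ against normalized reproducing kernels: it computes $\ps{T_\phi\widetilde{k_\lambda},\widetilde{k_\lambda}}_2=\mathcal P(\phi)(\lambda)$, the Poisson transform of $\phi$, deduces $|\mathcal P(\phi)(\lambda)|\le\|T_\phi\|$ on $\bb D$, and then invokes the a.e.\ radial convergence $\mathcal P(\phi)(re^{i\theta})\to\phi(e^{i\theta})$. You instead use the classical shift trick $\ps{T_\phi(z^nf),z^ng}=\ps{\phi f,g}$ to show that the compression by $P_+$ disappears in the limit, reduce to the multiplication operator $M_\phi$ on $L^2(\bb T)$ via the density of trigonometric polynomials, and finish with the elementary computation $\|M_\phi\|=\|\phi\|_\infty$ on near-maximizing sets $E_\varepsilon$. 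Your argument is more elementary in that it avoids the boundary-value theory of Poisson integrals entirely, needing only Hilbert-space density and measure theory; the paper's argument is shorter given the Poisson machinery already set up for Hardy spaces, and it has the conceptual bonus of identifying the Berezin-type quantity $\ps{T_\phi\widetilde{k_\lambda},\widetilde{k_\lambda}}_2$ with the harmonic extension of the symbol, a fact that is useful elsewhere in the theory. All the steps in your proposal check out, including the slightly delicate extension of the bilinear inequality from trigonometric polynomials to all of $L^2(\bb T)$, which is legitimate because the form $(F,G)\mapsto\ps{\phi F,G}$ is already known to be bounded (with the a priori constant $\|\phi\|_\infty$) before the sharper constant $\|T_\phi\|$ is transferred to it.
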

\begin{proof}
We have already seen that $T_\phi$ is bounded and $\|T_\phi\|\le\|\phi\|_\infty$.\\ For the converse inequality, denote by $\widetilde{k_\lambda}=k_\lambda/\|k_\lambda\|_2$ the normalized reproducing kernel. We have
\[\left|\ps{T_\phi\widetilde{k_\lambda},\widetilde{k_\lambda}}_2\right|~\le~\|T_\phi\widetilde{k_\lambda}\|_2\|\widetilde{k_\lambda}\|_2~\le~\|T_\phi\|.\]
Moreover, using that $\|k_\lambda\|^2_2=\ps{k_\lambda,k_\lambda}_2=k_\lambda(\lambda)=\frac{1}{1-|\lambda|^2}$, we have
\begin{align*}
\ps{T_\phi\widetilde{k_\lambda},\widetilde{k_\lambda}}_2~
&=~\ps{P_+(\phi\widetilde{k_\lambda}),\widetilde{k_\lambda}}_2\\
&=~\ps{\phi\widetilde{k_\lambda},\widetilde{k_\lambda}}&\text{because }\widetilde{k_\lambda}\in H^2\\
&=~\int_{\bb T}\phi(e^{i\theta})\frac{|k_\lambda(e^{i\theta})|^2}{\|k_\lambda\|_2^2} \,\mathrm dm(e^{i\theta})\\
&=~\int_{\bb T}\phi(e^{i\theta})\underset{\text{Poisson kernel}}{\underbrace{\frac{1-|\lambda|^2}{|e^{i\theta}-\lambda|^2}}} \,\mathrm dm(e^{i\theta})~=~\mathcal P(\phi)(\lambda),
\end{align*}
where $\mathcal P(\phi)$ is the Poisson transform of $\phi$. 
Hence we get that for every $\lambda\in\bb D$, we have
\begin{equation}\label{eq:poisson-norm}
|\mathcal P(\phi)(\lambda)|~\le~\|T_\phi\|.
\end{equation}
We will now use a well-known property of the Poisson integral (see \cite[Corollary 1.1.27]{MartinezAvendanoRosenthalB}) saying that 
\[
\lim_{r\to 1^-}\mathcal P(\phi)(r e^{i\theta})=\phi(e^{i\theta})\qquad \text{a.e. }e^{i\theta}\in\bb T
\]
Hence, according to \eqref{eq:poisson-norm}, it follows  that for almost all $e^{i\theta}\in\bb T$, we have 
\[|\phi(e^{i\theta})|\le\|T_\phi\|.\]
Hence $\|\phi\|_\infty\le\|T_\phi\|$, which concludes the proof.
\end{proof}
\noindent\textbf{Few observations. }Let $\phi\in L^\infty$. It is easy to see that 
\begin{enumerate}[(a)]
    \item $T_\phi^*=T_{\overline{\phi}}.$
    In particular, if $S=T_z$ is the shift operator on $H^2$, then $S^*=T_{\bar z}$ is the backward shift operator on $H^2$. For every $f\in H^2$, we have
\[Sf(z)~=~zf(z)\quad\text{and}\quad S^*f(z)~=~\frac{f(z)-f(0)}{z}.\]
\item If $\phi\in H^\infty$, the algebra of bounded and analytic functions on $\bb D$, then, for every $\lambda\in \bb D$, we have 
\begin{equation}\label{eq:Toeplitz-vector-co-analytic}
T_{\overline{\phi}}k_\lambda~=~\overline{\phi(\lambda)}k_\lambda.
\end{equation}
Indeed, let $f\in H^2$, then
\[
\ps{f,T_{\overline{\phi}}k_\lambda}_2~=~\ps{f,T_{\phi}^*k_\lambda}_2~=~\ps{T_\phi f,k_\lambda}_2~=~\ps{P_+(\phi f),f_\lambda}_2~=~\ps{\phi f,k_\lambda}_2.
\]
Now observe that since $\phi\in H^\infty$, then $\phi f\in H^2$ and we deduce that
\[
\ps{f,T_{\overline{\phi}}k_\lambda}_2~=~(\phi f)(\lambda)~=~\phi(\lambda)f(\lambda)~=~\ps{f,\overline{\phi(\lambda)}k_\lambda}_2.
\]
Therefore, $T_{\overline{\phi}}k_\lambda~=~\overline{\phi(\lambda)}k_\lambda$.\qed
\item In general, $T_\phi T_\psi\neq T_{\phi\psi}$ but Brown and Halmos proved the following.
\end{enumerate}
\begin{theorem}[Brown - Halmos, 1963 \cite{BrownHalmos1963}]
Let $\phi,\psi\in L^\infty(\bb T)$ Then
\[
T_\phi T_\psi~=~ T_{\phi\psi}\quad\iff\quad \text{either }\overline{\phi}\text{ or }\psi\text{ is in }H^\infty.
\]
\end{theorem}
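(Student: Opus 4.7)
My plan is to split the argument into the (easy) sufficiency and the (harder) necessity. For sufficiency, I will handle $\psi \in H^\infty$ directly and then reduce the case $\overline{\phi} \in H^\infty$ to it by adjunction. Indeed, if $\psi \in H^\infty$ then for every $f \in H^2$ we have $\psi f \in H^2$, so $P_+(\psi f) = \psi f$ and hence $T_\phi T_\psi f = P_+(\phi\psi f) = T_{\phi\psi} f$. If instead $\overline{\phi} \in H^\infty$, applying the previous case to $T_{\overline{\psi}} T_{\overline{\phi}}$ (in which $\overline{\phi}$ plays the role of the right factor) gives $T_{\overline{\psi}} T_{\overline{\phi}} = T_{\overline{\phi\psi}}$; taking adjoints and using $T_g^* = T_{\overline g}$ yields $T_\phi T_\psi = T_{\phi\psi}$.

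For necessity, my plan is to convert the operator identity into a Fourier-coefficient condition and then read it as a geometric statement about $S^*$-invariant subspaces. Setting $Q := I - P_+$, the projection onto $\overline{H^2_0}$, one verifies directly that
$$
(T_\phi T_\psi - T_{\phi\psi}) f = -P_+\bigl(\phi\, Q(\psi f)\bigr),
$$
so the identity $T_\phi T_\psi = T_{\phi\psi}$ amounts to $P_+(\phi\, Q(\psi f)) = 0$ for every $f \in H^2$. Testing on the basis $f = z^n$, $n \ge 0$, and reading off the coefficient of $z^k$, $k \ge 0$, this is equivalent to the scalar identity
$$
\sum_{j \ge 1} \phi_{k+j}\,\psi_{-n-j} = 0 \qquad \forall\, k, n \ge 0.
$$

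Next I would introduce the $\ell^2(\bb N)$-sequences $a := (\phi_{j+1})_{j\ge 0}$ and $b := (\overline{\psi_{-j-1}})_{j\ge 0}$, viewed as elements of $H^2$ under the canonical identification. The displayed identity becomes $\ps{S^{*k}a,\, S^{*n}b}_2 = 0$ for all $k,n \ge 0$; that is, the closed $S^*$-invariant subspaces
$$
M_a := \overline{\spa}\{S^{*k}a : k\ge 0\}, \qquad M_b := \overline{\spa}\{S^{*n}b : n\ge 0\}
$$
are mutually orthogonal. To conclude, I would argue that such orthogonality forces $M_a = \{0\}$ or $M_b = \{0\}$. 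If both were nonzero, $M_a^\perp$ would be a proper closed $S$-invariant subspace of $H^2$ containing the nonzero $S^*$-invariant subspace $M_b$; by Beurling's theorem, $M_a^\perp = \theta H^2$ for an inner function $\theta$ that is not a unimodular constant. But $\theta H^2$ admits no nonzero $S^*$-invariant subspace: for $f$ in such a subspace, the identity $f(0)\cdot 1 = f - S(S^* f)$ shows $f(0) \cdot 1 \in \theta H^2$, which forces $f(0) = 0$ (otherwise $1/\theta = \overline{\theta} \in H^2$, making $\theta$ a unimodular constant); iterating this on $S^{*k}f \in N$ shows $\widehat f(k) = 0$ for every $k$, hence $f = 0$. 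This contradiction yields $a = 0$ (equivalently $\overline{\phi}\in H^\infty$) or $b = 0$ (equivalently $\psi \in H^\infty$).

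The step I expect to be the main obstacle is the last structural one: the notes have not developed Beurling's theorem on shift-invariant subspaces of $H^2$, so this step either requires quoting an external result or replacing it by a self-contained argument in the spirit sketched above. The Fourier-coefficient reduction, while slightly tedious, is routine.
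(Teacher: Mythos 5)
The paper contains no proof of this statement to compare against: Brown--Halmos's multiplicativity theorem is stated and then explicitly admitted (``we will not really use this result so we will admit it''). Your argument is correct and complete. The sufficiency half (multiplication by $\psi\in H^\infty$ preserves $H^2$, then adjunction via $T_g^*=T_{\bar g}$ for the case $\overline\phi\in H^\infty$) is standard; for necessity, the identity $(T_\phi T_\psi-T_{\phi\psi})f=-P_+\big(\phi\,(I-P_+)(\psi f)\big)$, the reduction to $\ps{S^{*k}a,S^{*n}b}_2=0$ for the tail sequences $a=(\hat\phi(j+1))_{j\ge0}$ and $b=(\overline{\hat\psi(-j-1)})_{j\ge0}$, and the final orthogonality argument all check out. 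Two remarks. First, as you anticipate, Beurling's theorem is external to these notes, but you can bypass it with tools the paper already invokes in its proof of Coburn's theorem: the subfamily of your relations with $k=0$ or $n=0$ says precisely that every Fourier coefficient of $A\overline{B}\in L^1(\bb T)$ vanishes, where $A,B\in H^2$ have coefficient sequences $a,b$; hence $A\overline{B}\equiv0$, and since a nonzero $H^2$ function is nonzero a.e.\ (the fact quoted as \cite[Corollary 2.7.2]{MartinezAvendanoRosenthalB} in the Coburn proof), either $A\equiv 0$ (i.e.\ $\overline\phi\in H^\infty$) or $B\equiv0$ (i.e.\ $\psi\in H^\infty$). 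This is more elementary and closer in spirit to the paper. Second, a minor point of rigor: when you read off the coefficient of $z^k$ in $\phi\cdot Q(\psi z^n)$ you are convolving the coefficients of an $L^\infty$ function with those of an $L^2$ function; the product lies in $L^1$ and the convolution converges absolutely by Cauchy--Schwarz, so the step is valid, but it deserves a sentence.
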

We will do not really use this result so we will admit it. However, note that, in general, we have $T_\phi^n\neq T_{\phi^n}$ and there is no tracktable formula for $T_\phi^n$, which makes the question of hypercyclicity for general Toeplitz operators very difficult.

As we have seen in the first part, the question of eigenvectors is crucial in the study of hypercyclicity. We will now give two results in this direction.
\begin{theorem}[Coburn, 1966 \cite{Coburn1966}]
Let $\phi\in L^\infty (\bb T),\phi\not\equiv0$. Then either $\ker T_\phi=\{0\}$ or $\ker T_\phi^*=\{0\}$.
\end{theorem}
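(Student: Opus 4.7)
The plan is to argue by contradiction: assume there exist nonzero $f \in \ker T_\phi$ and $g \in \ker T_\phi^{*} = \ker T_{\bar\phi}$ in $H^2$, and derive $\phi \equiv 0$. The starting point is the relation $T_\phi f = P_+(\phi f) = 0$, which combined with the identity $\ker(P_+) = \overline{H^2_0}$ recorded in~\eqref{ker-Riesz-projection} gives $\phi f \in \overline{H^2_0}$, or equivalently $\overline{\phi f} = \bar\phi \bar f \in H^2_0$. Applying the same reasoning to $g$ with the symbol $\bar\phi$ yields $\phi \bar g \in H^2_0$.

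The key step is to show that $h := \phi f \bar g \in L^1(\bb T)$ vanishes identically. I would exploit the two factorizations
\[
h = (\phi \bar g) \cdot f = \overline{(\bar\phi \bar f) \cdot g}.
\]
In the first expression, both factors belong to $H^2$ (with $\phi \bar g \in H^2_0$), so by Cauchy--Schwarz the product lies in $H^1_0$. In the second, $\bar\phi \bar f \in H^2_0$ and $g \in H^2$, so the product belongs to $H^1_0$ and therefore its conjugate to $\overline{H^1_0}$. Hence $h$ lies in both $H^1_0$ and $\overline{H^1_0}$; since the Fourier spectra of these two spaces are contained in $\{n \ge 1\}$ and $\{n \le -1\}$ respectively, their intersection is trivial and $h = 0$ almost everywhere on $\bb T$.

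To conclude, I would invoke the classical fact that a nonzero function in $H^2$ is nonzero almost everywhere on $\bb T$ (a consequence of $\log|f| \in L^1(\bb T)$ for nonzero $f \in H^2$). Therefore $f \bar g \neq 0$ a.e., and $\phi f \bar g = 0$ a.e.\ forces $\phi \equiv 0$, contradicting the hypothesis. The main obstacle here is the double-factorization trick: recognizing that the single $L^1$ function $\phi f \bar g$ sits simultaneously in $H^1_0$ and $\overline{H^1_0}$ via the two different groupings. Once this is seen, the remaining steps (translating $\ker P_+$ into Fourier-support statements and invoking the a.e.\ non-vanishing of nonzero $H^2$ functions) are routine.
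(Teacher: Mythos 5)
Your proof is correct and is essentially the same as the paper's: both arguments form the single $L^1$ function $\phi f\bar g$ (the paper's $\psi=h_1\phi\overline{h_2}$), use the two groupings to place it simultaneously in $H^1_0$ and $\overline{H^1_0}$ so that all its Fourier coefficients vanish, and then conclude via the a.e.\ non-vanishing of nonzero $H^2$ functions. No substantive differences.
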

\begin{proof}
Argue by absurd and assume that there are two functions $h_1$ and $h_2$ in $H^\infty,h_1,h_2\neq0$ such that $T_\phi h_1=0$ and $T_{\overline{\phi}}h_2=0$. Taking account of \eqref{ker-Riesz-projection}, it means that $\phi h_1\in\overline{H^2_0}$ and $\overline{\phi}h_2\in \overline{H^2_0}$. It is an easy exercise to prove that if $f\in H^2_0$ and $g\in H^2$, then $fg\in L^1(\bb T)$ and $\widehat{fg}(n)=0$, $\forall n\le0$. 

Let now define $\psi=h_1\phi\overline{h_2}$. Since $\phi \overline{h_2}\in H_0^2$ and $h_1\in H^2$, then $\psi\in L^1(\bb T)$ and $\widehat{\psi}(n)=0$, $\forall n\le0$. Moreover $\overline{\psi}= \overline{h_1\phi} h_2$, with $\overline{h_1\phi}\in H_0^2$ and $h_2\in H^2$. Hence $\widehat{\,\overline{\psi}\,}(n)=0$ , $\forall n\le0$, which give that $\widehat{\psi}(n)=0$, $\forall n\ge0$. Therefore $\widehat{\psi}(n)=0$, $\forall n\in\bb Z$ and thus $\psi\equiv0$.
But a standard property of Hardy spaces (see \cite[Corollary 2.7.2]{MartinezAvendanoRosenthalB}) shows that, since $h_1,h_2\neq0$, then $h_1$ and $h_2$ are non-zero almost everywhere. Hence $\phi\equiv0$ a.e. which is a contradiction.
\end{proof}
When the symbol $\phi$ is continuous, we can link the spectral properties of $T_\phi$ with the winding number of $\phi$. Recall that if $\phi\in C(\bb T)$ and $\lambda\notin \phi(\bb T)$, then its winding number is defined as 
\begin{align*}
\w_\phi(\lambda)~
&=~\frac1{2i\pi}\int_{\phi(\bb T)}\frac1{u-\lambda}\,\mathrm du\\
&=~\frac1{2i\pi}\int_{\bb T}\frac{\phi'(\zeta)}{\phi(\zeta)-z}\,\mathrm d\zeta&\text{if }\phi\in C^1(\bb T).
\end{align*}
Furthermore, recall that an operator $T\in \mathscr{L}(H)$ is said to be \textit{Fredholm} is:
\begin{itemize}
    \item[(i)] $\Im(T)$ is closed;
    \item[(ii)] $\ker(T)$ and $\ker(T^*)$ are both finite dimensional.
\end{itemize}
In this case, the index of $T$ is defined as
\[j(T)=\dim(\ker T)-\dim(\ker T^*).\]
When $\phi$ is continuous, we have the following description of the spectrum.
\begin{theorem}
Let $\phi\in C(\bb T)$.
\begin{enumerate}[(a)]
    \item $T_\phi$ is a Fredholm operator on $H^2$ if and only if $\phi$ does not vanish on $\bb T$. In this case, 
    \[j(T_\phi)~=~-\w_\phi(0).\]
    \item We have
    \[\sigma(T_\phi)~=~\phi(\bb T)\cup\{\lambda\in\bb C\setminus\phi(\bb T)\,;\,\w_\phi(\lambda)\neq0\}.\]
\end{enumerate}
\end{theorem}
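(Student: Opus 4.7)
The plan is to first prove (a) via the Gohberg–Krein/Calkin algebra approach and then deduce (b) using Coburn's theorem (already established in the excerpt). The key input that has to be established is that for $\phi,\psi\in C(\bb T)$ the semi-commutator $T_\phi T_\psi-T_{\phi\psi}$ is compact on $H^2$. I would first verify this for trigonometric polynomials using the Brown–Halmos product rule (when $\overline\phi$ or $\psi$ is analytic we get equality, so the only contribution to the semi-commutator comes from pairing anti-analytic pieces of $\phi$ with analytic pieces of $\psi$), and check that these defects involve only finitely many rank-one operators of the form $f\mapsto\langle f,z^j\rangle z^k$. Then one extends by continuity: $\phi\mapsto T_\phi$ is norm-continuous with $\|T_\phi\|=\|\phi\|_\infty$ (Brown–Halmos, proved above), trigonometric polynomials are dense in $C(\bb T)$ by Fejér's theorem, and the compact operators form a closed ideal, so $T_\phi T_\psi-T_{\phi\psi}\in\mathcal K(H^2)$ for arbitrary $\phi,\psi\in C(\bb T)$. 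Consequently the map $\pi:\phi\mapsto T_\phi+\mathcal K(H^2)$ is a unital $*$-homomorphism from $C(\bb T)$ into the Calkin algebra, and moreover is isometric (since $\|T_\phi\|=\|\phi\|_\infty$ already upper-bounds the quotient norm, while the lower bound follows from $\pi$ being a $*$-homomorphism on a commutative $C^*$-algebra).

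From this the Fredholm criterion in (a) is immediate: $T_\phi$ is Fredholm iff $\pi(\phi)$ is invertible in the Calkin algebra iff $\phi$ is invertible in $C(\bb T)$ iff $\phi$ does not vanish on $\bb T$. For the index formula I would argue by homotopy. First compute the index on the generators $\phi(z)=z^n$: $T_{z^n}$ is the $n$-th shift (or backward shift) on $H^2$, whose kernel is $\{0\}$ and whose cokernel is spanned by $1,z,\dots,z^{|n|-1}$ when $n\geq 0$ (and symmetrically for $n<0$), giving $j(T_{z^n})=-n=-\w_{z^n}(0)$. Next, any non-vanishing $\phi\in C(\bb T)$ of winding number $n$ is homotopic inside the non-vanishing continuous functions to $z^n$ (standard fact: $\w$ classifies the connected components of $C(\bb T,\bb C^*)$). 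Under such a homotopy $t\mapsto\phi_t$, the operator $t\mapsto T_{\phi_t}$ is a norm-continuous path of Fredholm operators (by the previous paragraph) and so has constant index. Hence $j(T_\phi)=j(T_{z^n})=-\w_\phi(0)$.

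Part (b) now follows by applying (a) to $\phi-\lambda$. If $\lambda\in\phi(\bb T)$, then $\phi-\lambda$ vanishes, so $T_{\phi-\lambda}$ is not even Fredholm, hence $\lambda\in\sigma(T_\phi)$. If $\lambda\notin\phi(\bb T)$, then $T_{\phi-\lambda}$ is Fredholm of index $-\w_\phi(\lambda)$. When $\w_\phi(\lambda)\neq 0$, either $\ker T_{\phi-\lambda}$ or $\ker T_{\phi-\lambda}^*$ is non-trivial, so $T_{\phi-\lambda}$ is not invertible and $\lambda\in\sigma(T_\phi)$. Finally, when $\w_\phi(\lambda)=0$, Coburn's theorem (proved above) gives $\ker T_{\phi-\lambda}=\{0\}$ or $\ker T_{\phi-\lambda}^*=\{0\}$; combined with index zero this forces both kernels to be trivial, and since $T_{\phi-\lambda}$ is Fredholm its range is closed, so $T_{\phi-\lambda}$ is invertible and $\lambda\notin\sigma(T_\phi)$.

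The step I expect to be the real obstacle is establishing the compactness of $T_\phi T_\psi-T_{\phi\psi}$ for continuous symbols: everything else is a fairly mechanical consequence of this together with standard Fredholm theory and Coburn. In a fully self-contained write-up I would either present the Hankel operator identity $T_\phi T_\psi-T_{\phi\psi}=-H_{\overline\phi}^*H_\psi$ and prove compactness of $H_\phi$ for $\phi\in C(\bb T)$, or — keeping with the introductory tone of these notes — state the result and refer to Martinez-Avendaño–Rosenthal for the technical details.
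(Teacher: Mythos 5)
The paper does not actually prove this theorem: it is stated as a combination of results of Krein, Calder\'on, Spitzer, Widom and Devinatz, explicitly admitted without proof, and the reader is referred to Martinez-Avenda\~no--Rosenthal. So there is nothing in the text to compare your argument against; what you have written is the standard Gohberg--Krein/Douglas proof, and its overall architecture is sound: compactness of the semi-commutator for continuous symbols (finite rank for trigonometric polynomials, then density plus the closedness of the compact ideal), Atkinson's theorem, homotopy invariance of the index reducing to $T_{z^n}$, and then part (b) from part (a) together with Coburn. The index computation for $T_{z^n}$, the continuity of $t\mapsto T_{\phi_t}$ in operator norm along a homotopy, and the final step (index zero plus Coburn plus closed range forces invertibility) are all correct.

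The one place where your write-up is softer than it looks is the equivalence ``$\pi(\phi)$ invertible in the Calkin algebra iff $\phi$ invertible in $C(\bb T)$''. The forward implication from $\phi$ needs nothing, but the converse requires (i) that $\pi$ is \emph{injective}, i.e.\ that no nonzero $T_\phi$ with continuous symbol is compact, and (ii) spectral permanence (a C$^*$-subalgebra is inverse-closed in the ambient C$^*$-algebra), so that invertibility of $\pi(\phi)$ in the Calkin algebra yields invertibility inside $\pi(C(\bb T))\cong C(\bb T)$. Your justification of isometry --- ``follows from $\pi$ being a $*$-homomorphism on a commutative C$^*$-algebra'' --- is circular as stated, since a $*$-homomorphism is isometric precisely when it is injective, which is the point at issue. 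The fix is available from material already in these notes: the normalized reproducing kernels $\widetilde{k_\lambda}$ tend to $0$ weakly as $|\lambda|\to1^-$, so for any compact $K$ one has $\ps{(T_\phi+K)\widetilde{k_\lambda},\widetilde{k_\lambda}}=\mathcal P(\phi)(\lambda)+o(1)$, and the Fatou-type boundary convergence used in the Brown--Halmos proof gives $\|T_\phi+K\|\ge\|\phi\|_\infty$; this shows the essential norm of $T_\phi$ equals $\|\phi\|_\infty$ and in particular gives injectivity (hence isometry) of $\pi$. With that supplied, your argument is complete.
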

This beautiful theorem is the combined work of several mathematicians: Krein, Calder\'on, Spitzer, Widom, Devinatz. Since we will do not really us this result in the following, we will admit it. However, we refer to the book of Martinez-Avenda\~no - Rosenthal \cite{MartinezAvendanoRosenthalB} for its proof and more details. 

Observe that if $\lambda\in \bb C\setminus\phi(\bb T)$ with $\w_\phi(\lambda)<0$, then 
\[
-\w_\phi(\lambda)=j(T_{\phi-\lambda})=\dim\ker(T_\phi-\lambda)-\dim\ker(T_\phi^*-\bar\lambda)
\]
so, we necessary have $\dim\ker(T_\phi-\lambda)>0$. Hence
\[
\{\lambda\in\bb C\setminus \phi(\bb T)\,;\,\w_\phi(\lambda)<0\}\subset\sigma_p(T_\phi),
\]
and 
\begin{equation}\label{eq:windnumber-spectre}
\{\lambda\in\bb C\setminus \phi(\bb T)\,;\,\w_\phi(\lambda)>0\}\subset\sigma_p(T_\phi^*).
\end{equation}
\begin{example}~
\begin{enumerate}
 \item If $S$ denotes the shift operator on $H^2$, it can be easily proved that $\sigma(S^*)=\overline{\bb D}$ and $\sigma_p(S^*)=\bb D$.
    \item Let $\phi$ be a continuous symbol such that $\phi(\mathbb T)$ is represented as follows:
    \begin{figure}[ht]
\begin{tikzpicture}
\draw(0,0)node{\includegraphics[scale=.8]{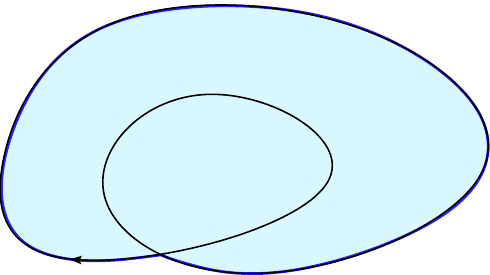}};
\draw(0,0)node{$-2$};
\draw(1,1)node{$-1$};
\draw(2.5,1.5)node{$0$};
\draw(-2,-1.4)node{$\phi(\bb T)$};
\draw[blue,<-](1,-1.5)to[bend left](3,-1.5)node[right]{$\sigma(T_\phi)$};
\end{tikzpicture}
        \caption{}
        \label{Fig3}
    \end{figure}
   
\end{enumerate}
\end{example}

\section{Hypercyclicity of Toeplitz operators}\label{Section:HcToeplitz}
Remind that we are interested in the following question: given a function $\phi\in L^\infty(\mathbb T)$, where $\mathbb T=\partial \mathbb D$ denotes the unit circle equipped with the normalized Lebesgue measure $m$, can one determine some necessary and/or sufficient conditions on $\phi$ to ensure that the associated Toeplitz operator $T_\phi$ is hypercyclic on $H^2$? As already said in the introduction, we will focus on two key results where we have a complete characterization. The first one concerns the case where the symbol $\phi$ is anti-analytic (meaning that $\phi\in \overline{H^\infty}$). The second one concerns the case where the symbol $\phi$ is of the form $\phi(z)=a/z+b+cz$, $z\in \mathbb T$ (which corresponds to tridiagonal Toeplitz matrices). 
\subsection{Analytic and anti-analytic symbols}
We start with a simple observation. If $\phi\in H^\infty$, then $T_\phi$ is not hypercyclic on $H^2$. Indeed, since $\phi\in H^\infty$, then for every $\lambda\in \bb D$, we have
\[T_\phi^*k_\lambda\,=\,T_{\bar\phi}k_\lambda\,=\,\overline{\phi(\lambda)}k_\lambda,\]
(see \eqref{eq:Toeplitz-vector-co-analytic}) and in particular $\sigma_p(T_\phi^*)\neq\varnothing$. Thus $T_\phi$ cannot be hypercyclic by Proposition~\ref{prop:spectre-ponctuel-adjoint-vide}. 

The case of anti-analytic symbol was studied by Godefroy and Shapiro.
\begin{theorem}[Godefroy - Shapiro, 1991 \cite{GodefroyShapiro1991}] Let $\phi\in H^\infty$. The following assumptions are equivalent.
\begin{enumerate}[(i)]
    \item $T_{\bar \phi}$ is hypercyclic on $H^2$;
    \item $\phi$ is non-constant and $\phi(\bb D)\cap\bb T\neq\varnothing$.
\end{enumerate}
\end{theorem}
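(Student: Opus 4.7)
I would treat the two implications separately and use the Godefroy–Shapiro criterion for the hard direction, exploiting the rich supply of eigenvectors for $T_{\bar\phi}$ provided by the reproducing kernels (formula \eqref{eq:Toeplitz-vector-co-analytic}).

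\textbf{The direction $(ii)\Rightarrow (i)$.} By \eqref{eq:Toeplitz-vector-co-analytic}, for every $\mu\in\mathbb D$ one has $T_{\bar\phi}k_\mu=\overline{\phi(\mu)}k_\mu$. Introduce
\[
\Omega_-=\{\mu\in\mathbb D:|\phi(\mu)|<1\},\qquad \Omega_+=\{\mu\in\mathbb D:|\phi(\mu)|>1\}.
\]
Since $\phi$ is non-constant, the open mapping theorem makes $\phi(\mathbb D)$ an open connected subset of $\mathbb C$; combined with the hypothesis $\phi(\mathbb D)\cap\mathbb T\neq\varnothing$, this forces both $\Omega_-$ and $\Omega_+$ to be non-empty open subsets of $\mathbb D$ (pick $\mu_0$ with $|\phi(\mu_0)|=1$ and use openness of $\phi$ near $\mu_0$). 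Hence
\[
\mathrm{span}\{k_\mu:\mu\in\Omega_-\}\subset H_-(T_{\bar\phi}),\qquad \mathrm{span}\{k_\mu:\mu\in\Omega_+\}\subset H_+(T_{\bar\phi}).
\]
To conclude it suffices to show that for any non-empty open $\Omega\subset\mathbb D$ the family $\{k_\mu:\mu\in\Omega\}$ is total in $H^2$: if $f\in H^2$ satisfies $0=\langle f,k_\mu\rangle_2=f(\mu)$ for all $\mu\in\Omega$, then by the identity principle (as $\Omega$ has an accumulation point in $\mathbb D$) $f\equiv 0$. Thus $H_\pm(T_{\bar\phi})$ are both dense in $H^2$ and the \ref{GScrit} applies.

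\textbf{The direction $(i)\Rightarrow(ii)$: contrapositive.} Suppose that $\phi$ is constant or that $\phi(\mathbb D)\cap\mathbb T=\varnothing$. If $\phi\equiv c$ is constant, then $T_{\bar\phi}=\bar c\,I$ has finite-dimensional orbits, which cannot be dense in the infinite-dimensional $H^2$. Otherwise $\phi$ is non-constant, so $\phi(\mathbb D)$ is open and connected, and the assumption $\phi(\mathbb D)\cap\mathbb T=\varnothing$ forces either $\phi(\mathbb D)\subset\mathbb D$ or $\phi(\mathbb D)\subset\mathbb C\setminus\overline{\mathbb D}$. In the first case $\|\phi\|_\infty\le 1$ (passing to non-tangential limits), so by Brown–Halmos $\|T_{\bar\phi}\|=\|\phi\|_\infty\le 1$, and Proposition~\ref{Prop:HCcn}(a) precludes hypercyclicity.

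\textbf{The remaining case, which I expect to be the main obstacle.} If $\phi(\mathbb D)\subset\mathbb C\setminus\overline{\mathbb D}$, then $1/\phi\in H^\infty$ with $\|1/\phi\|_\infty\le 1$. Using the Brown–Halmos product formula (with $\overline{\bar\phi}=\phi\in H^\infty$, or equivalently $1/\phi\in H^\infty$) I get $T_\phi T_{1/\phi}=T_{1/\phi}T_\phi=T_1=I$, so that $T_\phi$ is invertible; taking adjoints, $T_{\bar\phi}$ is invertible with $T_{\bar\phi}^{-1}=T_{\overline{1/\phi}}$. By Remark~\ref{Rk:EquivHcInv}, $T_{\bar\phi}$ is hypercyclic iff $T_{\overline{1/\phi}}$ is, but the previous paragraph applies to the symbol $1/\phi$ (which now lies in $H^\infty$ with $\|1/\phi\|_\infty\le 1$), and hence $T_{\overline{1/\phi}}$ is not hypercyclic. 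This finishes the contrapositive and the proof. The only delicate point is justifying the use of the product formula so that $T_{\bar\phi}$ genuinely inverts via $T_{\overline{1/\phi}}$ — everything else reduces to the open mapping theorem and basic estimates.
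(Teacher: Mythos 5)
Your proposal is correct and follows essentially the same route as the paper: reproducing-kernel eigenvectors plus the \ref{GScrit} for $(ii)\Rightarrow(i)$, and for the converse the norm bound $\|T_{\bar\phi}\|=\|\phi\|_\infty$ together with inversion via $T_{1/\phi}$ and Remark~\ref{Rk:EquivHcInv}. The only difference is cosmetic: you phrase the converse as a contrapositive case analysis on where the connected open set $\phi(\mathbb D)$ sits, while the paper derives $\inf_{\mathbb D}|\phi|<1<\sup_{\mathbb D}|\phi|$ directly and then invokes connectedness — the ingredients are identical.
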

\begin{proof}
$(ii)\implies (i)$. Since $\phi$ is non constant, by the open mapping theorem, $\phi(\bb D)$ is an open connected set which intersects $\bb T$.

\begin{figure}[ht]
\begin{tikzpicture}
 \draw(0,0)node{\includegraphics[scale=.7]{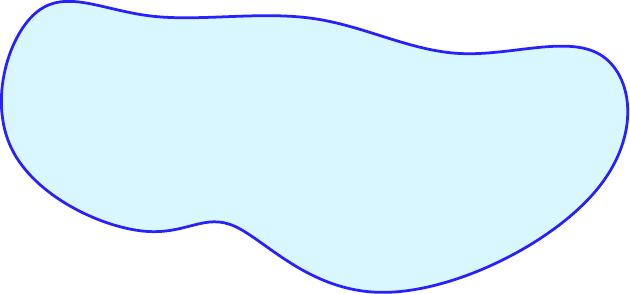}};
 \draw[blue](0,0)node{$\phi(\bb D)$};
 \draw(3,-1)arc(0:360:1);
 \draw(3.1,-1)node[below]{$\bb T$};
\end{tikzpicture}
    \caption{}
    \label{fig4}
\end{figure}

Let $U=\{z\in\bb D\,;\,|\phi(z)|<1\}$ and $V=\{z\in\bb D\,;\,|\phi(z)|>1\}$. Then $U$ and $V$ are two open sets which are both non empty. On the other hand, it follows from \eqref{eq:Toeplitz-vector-co-analytic} that
\[\spa\{k_z\,;\,z\in U\}\subset H_-(T_{\bar\phi})\quad\text{and}\quad \spa\{k_z\,;\,z\in V\}\subset H_+(T_{\bar\phi}),\]
where we recall that 
\[H_-(T)=\spa\big(\ker(T-\lambda)\,;\,|\lambda|<1\big)~\text{and}~H_+(T)=\spa\big(\ker(T-\lambda)\,;\,|\lambda|>1\big)\]
According to \ref{GScrit}, it is sufficient to prove that if $\Omega$ is a non empty open subset of $\bb D$, then $\spa\{k_z\,;\,z\in\Omega\}$ is dense in $H^2$. Thus let $f\in H^2$ such that $f\perp k_z,\,\forall\,z\in\Omega$. Hence
\[f(z)~=~\ps{f,k_z}~=~0,\quad\text{for every }z\in\Omega.\]
But since $\Omega$ is a non-empty open subset of $\bb D$, the uniqueness principle implies that $f\equiv0$. Hence $H_-(T_{\bar\phi})$ and $H_+(T_{\bar\phi})$ are both dense in $H^2$, which implies that $T_{\bar \phi}$ is hypercyclic on $H^2$.

$(i)\implies(ii)$. Assume now that $T_{\bar \phi}$ is hypercyclic on $H^2$. Then, of course, $\phi$ is non-constant and $T_{\bar\phi}$ is not a contraction. In particular, $\|\phi\|_\infty=\|T_\phi\|=\|T_{\bar\phi}\|>1$. Moreover, we also have $\inf\limits_{z\in\bb D}|\phi(z)|<1$. Indeed, if $\inf\limits_{z\in\bb D}|\phi(z)|\ge1$, then $1/\phi\in H^\infty$ and $T_{1/\phi}^*$ is not hypercyclic because $\|T_{1/\phi}^*\|=\|T_{1/\phi}\|=\|1/\phi\|_{\infty}\le1$. But since $T_{1/\phi}T_\phi=T_\phi T_{1/\phi}=I$, the operator $T_\phi$ is invertible and $T_\phi^{-1}=T_{1/\phi}$. Thus $(T_\phi^*)^{-1}=T_{1/\phi}^*$. Now, since $T_{1/\phi}^*$ is not hypercyclic, it follows from \Cref{Rk:EquivHcInv} that $T_\phi^*=T_{\bar\phi}$ is not hypercyclic as well, which is absurd. Hence we have
\[\inf_{z\in\bb D}|\phi(z)|\,<\,1\,<\,\sup_{z\in\bb D}|\phi(z)|.\]
Now, a simple connectedness argument implies that $\phi(\bb D)\cap\bb T\neq\varnothing$.
\end{proof}
We immediately recover the result of Rolewicz.
\begin{corollary}[Rolewicz, 1969 \cite{Rolewicz1969}]
Let $\lambda\in \bb C$.Then 
\[\lambda S^*=T_{\lambda\bar z}\text{ is hypercyclic on }H^2\iff|\lambda|>1.\]
\end{corollary}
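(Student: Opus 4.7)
The plan is to derive this directly as a special case of the Godefroy--Shapiro characterization just established. The symbol of the operator $\lambda S^*$ is $\lambda\bar z$, so I want to write this as $T_{\bar \phi}$ for a suitable $\phi\in H^\infty$ and then read off the conditions. The natural choice is $\phi(z)=\bar\lambda z$, which indeed lies in $H^\infty$, is bounded, and satisfies $T_{\bar\phi}=T_{\lambda\bar z}=\lambda S^*$.

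With this choice, I would verify the two conditions of the Godefroy--Shapiro theorem. First, $\phi$ is non-constant if and only if $\bar\lambda\neq 0$, i.e.\ $\lambda\neq 0$. Second, $\phi(\mathbb D)=\bar\lambda \mathbb D$ is the open disc centered at the origin of radius $|\lambda|$, and this open disc meets $\mathbb T$ precisely when $|\lambda|>1$ (if $|\lambda|\le 1$ the set $\bar\lambda\mathbb D$ is contained in $\mathbb D$ and therefore disjoint from $\mathbb T$). Combining these two observations, the Godefroy--Shapiro theorem yields that $\lambda S^*$ is hypercyclic on $H^2$ if and only if $|\lambda|>1$, which is exactly Rolewicz's statement.

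There is really no obstacle here: the corollary is just the specialization of the Godefroy--Shapiro theorem to the linear symbol $\phi(z)=\bar\lambda z$. As a sanity check, when $|\lambda|\le 1$ one may alternatively note that $\|\lambda S^*\|=|\lambda|\le 1$ and invoke \Cref{Prop:HCcn}\textit{(a)} to rule out hypercyclicity, recovering the easy direction without appeal to the full Godefroy--Shapiro theorem.
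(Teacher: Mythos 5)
Your proposal is correct and is exactly the specialization the paper intends: the corollary is stated immediately after the Godefroy--Shapiro theorem with no written proof, precisely because it follows by taking $\phi(z)=\bar\lambda z$ and noting that $\phi(\mathbb D)=\bar\lambda\mathbb D$ meets $\mathbb T$ if and only if $|\lambda|>1$. Your computation of $\overline{\phi}$ and the two conditions is accurate, so nothing further is needed.
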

\subsection{Tridiagonal Toeplitz operators}
In \cite{Shkarin2012}, S. Shkarin characterized hypercyclic Toeplitz operators $T_F$ with symbols of the form 
\[F(e^{i\theta})\,=\,ae^{-i\theta}+b+ce^{i\theta},\quad a,b,c\in\bb C,\,c\neq0.\]
They correspond to the following Toeplitz matrices which are tridiagonal.
\[
\begin{pmatrix}
 ~b&a&&&&&\\
 ~c&b&a&&(0)&\\
 &c&b&a&&\\
 &&\ddots&\ddots&\ddots&\\
&(0) &&\ddots&\ddots&\ddots\\
\end{pmatrix}
\]
Observe that $T_F=aS^*+bI+cS$ and then $T_F^*=\bar a S+\bar b I+\bar cS^*$. Using that $S^*S=I$, we get
\begin{align*}
    T_F^*T_F-T_FT_F^*\,
    &=\,(\bar aS+\bar bI+\bar cS^*)(aS^*+bI+cS)-(aS^*+bI+cS)(\bar aS+\bar bI+\bar cS^*)\\
    &=\, (|a|^2-|c|^2)SS^*+|c|^2-|a|^2\\
    &=\, (|c|^2-|a|^2)(I-SS^*).
\end{align*}
Since $S^*$ is a contraction, we have $I-SS^*\ge0$. Thus if $|c|\ge|a|$, we get $T_F^*T_F-T_FT_F^*\ge0$ meaning that $T_F$ is hyponormal. In particular, if $|c|\ge|a|$ then $T_F$ cannot be hypercyclic (see \Cref{Theorem-Hyponormal}). It is also possible to check that if $|c|>|a|$, then there exists $\lambda\in\mathbb C\setminus F(\mathbb T)$, such that
$\w_F(\lambda)>0$ (see \cite[Fact 5.3]{FricainGrivauxOstermann_preprint}). Then, according to \eqref{eq:windnumber-spectre}, we get $\sigma_p(T_F^*)\neq\varnothing$, which implies, by \Cref{prop:spectre-ponctuel-adjoint-vide}, that $T_F$ cannot be hypercyclic on $H^2$.  Note that this last argument extends to the Banach setting $H^p$ but not to the case $|a|=|c|$.

Thus we may assume that $|c|<|a|$. We need now two lemmas.
\begin{lemma}\label{Le:SkL1}
Let $F(z)=\frac az+b+cz,\,|a|>|c|>0$. Then
\begin{enumerate}[(a)]
    \item $F(\bb T)$ is an ellipse;
    \item The interior $\mathcal E$ of the ellipse satisfies
    \[\mathcal E=F\left(\left\{z\in\bb C\,;\,1<|z|<\left|\frac ac\right|\right\}\right).\]
\end{enumerate}
\end{lemma}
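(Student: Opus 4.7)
The plan is to treat (a) by a direct parametrization and (b) by a continuity-of-roots argument applied to the quadratic $F(z)=\lambda$.

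For (a), I would substitute $z=e^{i\theta}$ and write $a=|a|e^{i\alpha}$, $c=|c|e^{i\gamma}$. Setting $\mu=(\alpha+\gamma)/2$ and $\psi=\theta+(\gamma-\alpha)/2$, a direct computation collapses $F(e^{i\theta})-b$ to
\[
e^{i\mu}\bigl((|a|+|c|)\cos\psi+i(|c|-|a|)\sin\psi\bigr).
\]
As $\theta$ (hence $\psi$) varies over $[0,2\pi)$, this is the standard parametrization of an ellipse centered at the origin with semi-axes $|a|+|c|$ and $|a|-|c|$ (nondegenerate since $|a|>|c|$). Rotating by $e^{i\mu}$ and translating by $b$ then gives the claimed ellipse.

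For (b), the starting point is the elementary identity $F(z)=F\!\bigl(\tfrac{a}{cz}\bigr)$, which one verifies by direct substitution. The involution $\sigma\colon z\mapsto a/(cz)$ sends $\mathbb T$ bijectively onto the circle $\{|z|=|a/c|\}$, hence
\[
F\bigl(\{|z|=|a/c|\}\bigr)=F(\mathbb T)=\partial\mathcal E,
\]
so both boundary components of the annulus $A=\{1<|z|<|a/c|\}$ map onto the ellipse: $F(\partial A)=\partial\mathcal E$. Now the equation $F(z)=\lambda$ rewrites as $cz^2+(b-\lambda)z+a=0$, a quadratic whose two roots $z_1(\lambda),z_2(\lambda)$ satisfy $z_1z_2=a/c$ and $z_1+z_2=(\lambda-b)/c$. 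In particular $|z_1||z_2|=|a/c|$, the roots are $\sigma$-paired, and for $\lambda$ in a bounded set they stay in a fixed compact subset of $\mathbb C^*$ (none can escape to $0$ or $\infty$, since that would force the other to escape to $\infty$ or $0$, contradicting $z_1+z_2$ bounded and $z_1z_2\neq 0$).

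By continuous dependence of the roots on $\lambda$, the number $N(\lambda)$ of roots lying in the open set $A$ is locally constant on $\mathbb C\setminus F(\partial A)=\mathcal E\sqcup(\mathbb C\setminus\overline{\mathcal E})$, since a root can migrate across $\partial A$ only when $\lambda$ itself crosses $F(\partial A)=\partial\mathcal E$. It then suffices to evaluate $N$ on each component: for $\lambda=b\in\mathcal E$ (the center of the ellipse), the equation reduces to $cz^2+a=0$, giving $z=\pm i\sqrt{a/c}$ of modulus $\sqrt{|a/c|}\in(1,|a/c|)$, so $N(b)=2$; for $|\lambda|$ large the two roots behave like $-a/(\lambda-b)\to 0$ and $(\lambda-b)/c\to\infty$, so $N(\lambda)=0$ on the unbounded component. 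Therefore $F(A)=\mathcal E$.

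The main obstacle, beyond the mechanical parametrization in (a), is making the counting step in (b) rigorous: one must rule out the possibility that a root leaks out through $\partial A$ while $\lambda$ stays in a fixed component of $\mathbb C\setminus\partial\mathcal E$. This is handled by the combination of $F(\partial A)=\partial\mathcal E$ (so a boundary crossing of a root forces $\lambda$ to cross $\partial\mathcal E$) and the a priori bound on the roots coming from Vieta's formulas, which prevents escape to $0$ or $\infty$ for $\lambda$ in any bounded region.
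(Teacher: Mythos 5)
Your proof is correct. Part (a) takes essentially the same route as the paper: reduce to $b=0$ and $a,c$ real positive via the rotation $e^{i\mu}$, then read off the standard parametrization of an ellipse with semi-axes $|a|+|c|$ and $|a|-|c|$. Part (b), however, is a genuinely different argument. The paper proves the two inclusions by direct real-variable computation: for $F(A)\subset\mathcal E$ (with $A=\{1<|z|<|a/c|\}$) it bounds $\tfrac ar+cr$ and $\left|cr-\tfrac ar\right|$ on $(1,a/c)$ via monotonicity tables, and for $\mathcal E\subset F(A)$ it applies the intermediate value theorem to $r\mapsto \tfrac{x^2}{(a/r+cr)^2}+\tfrac{y^2}{(cr-a/r)^2}$ to produce an explicit preimage radius $r_0\in\bigl(1,\sqrt{a/c}\bigr)$. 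You instead exploit the involution $\sigma(z)=a/(cz)$ and the identity $F=F\circ\sigma$: the two roots of $cz^2+(b-\lambda)z+a=0$ are $\sigma$-paired by Vieta, $F(\partial A)=F(\mathbb T)=\partial\mathcal E$, and the count $N(\lambda)$ of roots in $A$ is locally constant on the two components of $\mathbb C\setminus\partial\mathcal E$, equal to $2$ at the center $b$ and to $0$ near infinity. This is slicker and more conceptual, and it surfaces the symmetry $z_0\leftrightarrow a/(cz_0)$ that the paper only exploits afterwards, in the factorization $(z-z_0)\bigl(z-\tfrac a{cz_0}\bigr)$ and in \Cref{Le:SkL2}; the paper's computation, in exchange, is entirely elementary. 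One small step you should write out: the local-constancy argument gives $\mathcal E\subset F(A)\subset\overline{\mathcal E}$, and to conclude $F(A)=\mathcal E$ you still need $F(A)\cap\partial\mathcal E=\varnothing$. This follows immediately from the $\sigma$-pairing you already set up (for $\lambda\in F(\mathbb T)$ the two roots are $w\in\mathbb T$ and $\sigma(w)$ of modulus $|a/c|$, so neither lies in the open annulus), or alternatively from the open mapping theorem applied to $F$ on $A$.
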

\begin{figure}[ht]
\begin{tikzpicture}
 \draw(0,0)node{\includegraphics[scale=.75]{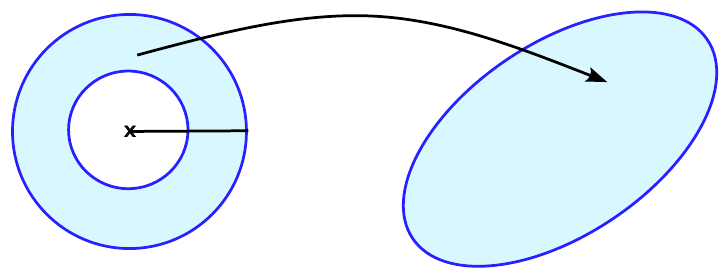}};
 \draw(0,1.8)node{$F$};
 \draw[blue](3,0)node{$\mathcal E$};
 \draw[blue](3.9,-1.15)node{$F(\bb T)$};
 \draw(-2.1,-.15)node{$1$}(-1.2,0)node{$\left|\frac ac\right|$};
 \draw(-3,-.9)node{$\bb T$};
\end{tikzpicture}
    \caption{}
    \label{fig5}
\end{figure}
\begin{proof}
Let $a=|a|e^{i\alpha},\,c=|c|e^{i\gamma}$ and consider $\widetilde F(z)=\frac{|a|}z+|c|z$. Then
\begin{align*}
F(z)\,
&=\, b+\frac{|a|}ze^{i\alpha}+|c|e^{i\gamma}z\\
&=\,b+e^{i\left(\frac{\alpha+\gamma}2\right)}\left(\frac{|a|}ze^{i\left(\frac{\alpha-\gamma}2\right)}+|c|e^{i\left(\frac{\gamma-\alpha}2\right)}z\right)\\
&=\, b+e^{i\left(\frac{\alpha+\gamma}2\right)}\widetilde F\left(e^{i\left(\frac{\gamma-\alpha}2\right)}z\right).
\end{align*}
This formula shows that \textit{we can assume, without loss of generality, that $b=0$ and $a$ and $c$ are real with $a>c>0$}.

$(a)$ $F(e^{i\theta})=ae^{-i\theta}+ce^{i\theta}=(a+c)\cos\theta+i(c-a)\sin\theta$. Hence
\[F(\bb T)\,=\,\left\{\,x+iy\in\bb C\,;\,\frac{x^2}{(a+c)^2}+\frac{y^2}{(a-c)^2}=1\,\right\}\]
and thus $F(\bb T)$ is an ellipse centered at $0$ whose foci are $(\pm2\sqrt{ac},0)$ and the length of the axis are $2(a+c)$ and $2(a-c)$.

$(b)$ We have
\[\mathcal E\,=\,\left\{\,x+iy\in\bb C\,;\,\frac{x^2}{(a+c)^2}+\frac{y^2}{(a-c)^2}<1\,\right\}.\]
Let us first check that $F\left(\left\{z\in\bb C\,;\,1<|z|<\left|\frac ac\right|\right\}\right)\subset\mathcal E$. Let $z=re^{i\theta},\,1<r<a/c$. Thus
\begin{equation}\label{Eq2}
F(re^{i\theta})=\frac are^{-i\theta}+cre^{i\theta}=\left(\frac ar+cr\right)\cos\theta+i\left(cr-\frac ar\right)\sin\theta.
\end{equation}
If we denote by $\phi(r)=\frac ar+cr$, then it is easy to see that
\begin{center}
    \begin{tikzpicture}
  \tkzTabInit[lgt=2,espcl=4]
    {$r$ /1, $\phi'(r)$ /1, $\phi(r)$ /2}
    {$1$, $\sqrt{\frac ac}$, $\frac ac$}
  \tkzTabLine{,-,z,+,}
  \tkzTabVar{+/ $a+c$, -/ $2\sqrt{ac}$, +/ $a+c$}
\end{tikzpicture}
\end{center}
Hence for all $1<r<\frac ac$, we have $0<\frac ar+cr<a+c$. Similarly, if we denote by $\psi(r)=cr-\frac ar$, then 
\begin{center}
    \begin{tikzpicture}
  \tkzTabInit[lgt=2,espcl=4]
    {$r$ /1, $\psi'(r)$ /1, $\psi(r)$ /2}
    {$1$, $\sqrt{\frac ac}$, $\frac ac$}
  \tkzTabLine{,,+,,}
\draw[->](2.5,-3.5)node[below]{$c-a$}--(10.5,-2.5)node[above]{$c+a$};
\draw(6.5,-3)node{$0$};
\end{tikzpicture}
\end{center}
Hence for all $1<r<\frac ac$, we have $\left| cr-\frac ar\right|<a-c$. Thus
\[\frac{\left(\frac ar+cr\right)^2\cos^2\theta}{(a+c)^2}+\frac{\left(cr-\frac ar\right)^2\sin^2\theta}{(a-c)^2}<\cos^2\theta+\sin^2\theta=1,\]
which means that $F(r^{i\theta})\in\mathcal E$. 

Let us now check that $\mathcal E\subset F\left(\left\{z\in\bb C\,;\,1<|z|<\left|\frac ac\right|\right\}\right).$ Let $x+iy\in\mathcal E$ then $\frac{x^2}{(a+c)^2}+\frac{y^2}{(a-c)^2}<1$. According to \eqref{Eq2}, we need to prove that there exists $r_0$ with $1<r_0<a/c$ and $\theta\in\bb R$ such that
\[
\begin{cases}
x&=~\displaystyle\left(\frac a{r_0}+cr_0\right)\cos\theta;\\[.3cm]
y&=~\displaystyle\left(cr_0-\frac a{r_0}\right)\sin\theta.
\end{cases}
\]
Let $\phi_1(r)=\frac{x^2}{\left(\frac ar+cr\right)^2}+\frac{y^2}{\left(cr-\frac ar\right)^2}$ for $r\neq\sqrt{\frac ac}$. The function $\phi_1$ is continuous and differentiable on $\left(1,\frac ac\right)\setminus\left\{\sqrt{\frac ac}\right\}$ and we have
\begin{align*}
\phi_1'(r)\,
&=\,\dfrac{~\displaystyle -2x^2\left(c-\frac a{r^2}\right)~}{~\displaystyle \left(\frac ar+cr\right)^3~}\,-\,\dfrac{~\displaystyle 2y^2\left(c+\frac a{r^2}\right)~}{~\displaystyle \left(cr-\frac ar\right)^3~}\\
&=\,\dfrac{\displaystyle~   -\,2x^2\left(c-\frac a{r^2}\right)\left(cr-\frac ar\right)^3\,-\,2y^2\left(c+\frac a{r^2}\right)\left(\frac ar+cr\right)^3   ~}{\displaystyle~   \left(\frac ar+cr\right)^3\left(cr-\frac ar\right)^3   ~}\\
&=\,-\,\dfrac{\displaystyle~ 2x^2r^3\left(c-\frac a{r^2}\right)^4\,+\,2y^2\left(c+\frac a{r^2}\right)\left(\frac ar+cr\right)^3   ~}{\displaystyle~   \left(\frac ar+cr\right)^3\left(cr-\frac ar\right)^3   ~}.
\end{align*}
Observe that $\phi_1'(r)\ge0\iff cr-\frac ar\le0\iff r<\sqrt{\frac ar}$. If follows that we have the  monotonicity table
\begin{center}
    \begin{tikzpicture}
  \tkzTabInit[lgt=2,espcl=4]
    {$r$ /1, $\phi_1'(r)$ /1, $\phi_1(r)$ /2}
    {$1$, $\sqrt{\frac ac}$, $\frac ac$}
  \tkzTabLine{,+,d,-,}
  \tkzTabVar{-/ ,  +D+/$+\infty$, -/ }
\end{tikzpicture}
\end{center}
Moreover $\phi_1(1)=\frac{x^2}{(a+c)^2}+\frac{y^2}{(a-c)^2}<1$. So, by the mean value principle, there exists a $r_0\in\left(1,\sqrt{\frac ac}\right)$ such that $\phi(r_0)=1$, which means that 
\[\frac{x^2}{~\displaystyle\left(\frac a{r_0}+cr_0\right)^2~}+\frac{y^2}{~\displaystyle\left(cr_0-\frac a{r_0}\right)^2~}\,=\,1.\]
Now, we can find $\theta\in\bb R$ such that 
\[
x\,=\,\left(\frac a{r_0}+cr_0\right)\cos\theta\quad\text{and}\quad 
y\,=\,\left(cr_0-\frac a{r_0}\right)\sin\theta.
\]
Hence $x+iy=F(r_0e^{i\theta})$ and $1<r_0<|a/c|$. Finally, we get
\[\mathcal E=F\left(\left\{z\in\bb C\,;\,1<|z|<\left|\frac ac\right|\right\}\right).\qedhere\]
\end{proof}
\begin{lemma}\label{Le:SkL2}
Let $d\in\bb C,\,|d|<1$ and let 
\[A\subset W=\{z\in\bb D\,;\,|d|<|z|<1.\}\]
Assume that $A$ has an accumulation point in $W$. Let $q:\bb D\to\bb C$ be an analytic function such that $q(z)=q\left(\frac dz\right)$ for all $z\in A$. Then $q$ is constant.
\end{lemma}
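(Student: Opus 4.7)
The plan is to first upgrade the hypothesis from the set $A$ to the whole annulus $W$ using the identity principle, and then exploit the uniqueness of Laurent coefficients on $W$ to conclude that $q$ has no non-constant Taylor term.

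First, I would introduce the auxiliary function $h(z) = q(z) - q(d/z)$ on $W$. The key observation is that $W$ is stable under $z \mapsto d/z$: if $|d| < |z| < 1$ then $|d/z| = |d|/|z|$ also lies in $(|d|, 1)$. Since $q$ is analytic on $\mathbb{D} \supset W$, the composition $z \mapsto q(d/z)$ is analytic on $W$, and hence so is $h$. The annulus $W$ is a connected open subset of $\mathbb{C}$, and by hypothesis $h$ vanishes on $A$, which has an accumulation point in $W$. The identity principle then forces $h \equiv 0$ on $W$, so that $q(z) = q(d/z)$ for every $z \in W$.

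Next, I would expand $q$ as a Taylor series $q(z) = \sum_{n \geq 0} a_n z^n$ on $\mathbb{D}$. For $z \in W$, the condition $|z| > |d|$ makes $|d/z| < 1$, so
\[
q(d/z) = \sum_{n \geq 0} a_n \left(\frac{d}{z}\right)^n = a_0 + \sum_{n \geq 1} a_n d^n z^{-n},
\]
the series converging absolutely on $W$. Combining this with the Taylor expansion of $q$, the identity $q(z) = q(d/z)$ on $W$ reads
\[
\sum_{n \geq 1} a_n z^n \;=\; \sum_{n \geq 1} a_n d^n z^{-n} \qquad (z \in W).
\]
By uniqueness of the Laurent coefficients on the annulus $W$, each coefficient of $z^n$ for $n \geq 1$ on the left side must vanish, giving $a_n = 0$ for all $n \geq 1$. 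Hence $q \equiv a_0$ on $\mathbb{D}$.

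There is no genuine obstacle here, but the one point that deserves care is verifying that $W$ is invariant under $z \mapsto d/z$, so that $h$ really is defined and analytic on all of $W$; without this, the identity principle could not be invoked on a connected domain containing the accumulation point of $A$. Everything else reduces to two standard tools: the identity theorem for analytic functions on a connected open set, and the uniqueness of Laurent expansions on an annulus.
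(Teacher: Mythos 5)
Your proof is correct, and it diverges from the paper's after the first step. The opening move is identical: you check that $q(d/z)$ is analytic on $W$ and apply the identity principle on the connected annulus $W$ to upgrade $q(z)=q(d/z)$ from $A$ to all of $W$, exactly as the paper does. For the endgame the paper takes a different route: it glues $q$ on $\mathbb D$ with $z\mapsto q(d/z)$ on $\{|z|>|d|\}$ (the two definitions agreeing on the overlap $W$) to obtain an entire function $\widetilde q$, observes that $\widetilde q(z)\to q(0)$ as $|z|\to\infty$ so $\widetilde q$ is bounded, and concludes by Liouville's theorem. You instead compare Laurent expansions on $W$: writing $q(z)=\sum_{n\ge0}a_nz^n$, the identity becomes $\sum_{n\ge1}a_nz^n=\sum_{n\ge1}a_nd^nz^{-n}$, and uniqueness of Laurent coefficients forces $a_n=0$ for $n\ge1$. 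Both arguments are standard and equally short; yours is arguably more elementary in that it avoids the gluing/analytic-continuation step and Liouville, while the paper's makes the underlying mechanism more visible (the functional equation lets $q$ continue to a bounded entire function). The only point worth a footnote is the degenerate case $d=0$, where your remark that $|d/z|$ lies in $(|d|,1)$ is vacuous; there $q(d/z)=q(0)$ and the conclusion already follows from the identity principle, and your Laurent computation still goes through, so nothing breaks.
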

\begin{proof}~
    \begin{figure}[ht]
    \begin{tikzpicture}
      \draw(0,0)node{\includegraphics[scale=1]{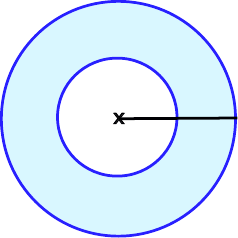}};
      \draw(1.1,-.15)node{$d$};
      \draw(2.2,0)node{$1$};
      \draw[blue](-1,-1)node{$W$};
    \end{tikzpicture}
    \caption{}
    \label{fig6}
\end{figure}

Observe that $z\longmapsto q(z)$ and $z\longmapsto q\left(\frac dz\right)$ are analytic on $W$ and coincide on $A$. By the analytic principle (since $A$ has an accumulation point in $W$), we get that for every $z\in W$, $q(z)=q\left(\frac dz\right)$. Define now
\[
\widetilde q(z)\,=\,\begin{cases}
q(z),&z\in\bb D;\\
q\left(\frac dz\right),&|z|>d.
\end{cases}
\]
Then $\widetilde q$ is a well defined analytic function on $\bb C$. Moreover, we have
\[
\lim_{|z|\to\infty}\widetilde q(z)\,=\,\lim_{|z|\to\infty} q\left(\frac dz\right)\,=\,q(0).
\]
Hence $\widetilde q$ is bounded and, by Liouville's theorem, we get that $\widetilde q$ is constant. Thus $q$ is constant as well.
\end{proof}
We can now give the Shkarin's characterization for hypercyclicity of tridiagonal Toeplitz operators.
\begin{theorem}[Shkarin, 2012 \cite{Shkarin2012}]
Let $F(e^{i\theta})=ae^{-i\theta}+b+ce^{i\theta}$, where $a,b,c\in\bb C$, $c\neq 0$. Then the following assertions are equivalent.
\begin{enumerate}
\item[(1)] $T_F$ is hypercyclic on $H^2$;
\item[(2)] both conditions are satisfied:
\begin{enumerate}[(i)]
    \item $|a|>|c|$;
    \item $\mathcal E\cap\bb T\neq\varnothing$, where $\cal E$ is the interior of the elliptic curve $F(\bb T)$.
\end{enumerate}
\end{enumerate}
\end{theorem}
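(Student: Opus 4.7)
The plan is to prove the two directions by different methods: $(2)\Rightarrow(1)$ via the Godefroy--Shapiro criterion after computing the eigenvectors of $T_F$ in closed form, and $(1)\Rightarrow(2)$ by contradiction, combining the hyponormality computation already carried out before the theorem with the classical fact that $\|N\|=r(N)$ for every hyponormal operator $N$ (which one obtains by iterating \Cref{Fact:Hypo}).

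\emph{Eigenvectors of $T_F$.} First I would solve $T_F f=\lambda f$ at the level of Taylor coefficients, producing the three-term recurrence $af_{n+1}+(b-\lambda)f_n+cf_{n-1}=0$ with $f_{-1}=0$, whose characteristic equation $at^2+(b-\lambda)t+c=0$ has roots $t_1,t_2$. The $\ell^2$ constraint forces both roots to lie in $\mathbb D$, and substituting $t_i=1/z_i$ rewrites this as $\lambda=F(z)$ with $z\in\{1<|z|<|a/c|\}$, i.e.\ $\lambda\in\mathcal E$ by \Cref{Le:SkL1}. A partial-fraction computation then produces the (essentially unique) eigenvector
\[
h_z(w)\,=\,\frac{1}{z-w}-\frac{1}{z'-w},\qquad z'=\frac{a}{cz},\qquad F(z)=\lambda;
\]
in particular $\sigma_p(T_F)=\mathcal E$.

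\emph{Proof of $(2)\Rightarrow(1)$.} Since $\mathcal E$ is open and connected, the hypothesis $\mathcal E\cap\mathbb T\neq\varnothing$ makes both $\mathcal E\cap\mathbb D$ and $\mathcal E\setminus\overline{\mathbb D}$ non-empty open sets. To apply the \ref{GScrit}, I would show that, for each of the two corresponding open subsets $\Omega$ of the annulus $\{1<|z|<|a/c|\}$, the linear span of $\{h_z:z\in\Omega\}$ is dense in $H^2$. Expanding $1/(z-w)$ and $1/(z'-w)$ in powers of $w$ and pairing with $g\in H^2$ yields
\[
\langle g,h_z\rangle_2\,=\,u\,g(u)-u'\,g(u'),\qquad u=1/\bar z,\;u'=1/\bar{z'},
\]
with $uu'=\overline{c/a}=:d$ and $|d|<1$. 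Setting $q(u)=u\,g(u)$, the orthogonality becomes $q(u)=q(d/u)$ on the open set $A=\{1/\bar z:z\in\Omega\}\subset W=\{|d|<|u|<1\}$, so by \Cref{Le:SkL2} the analytic function $q$ is constant on $\mathbb D$; since $q(0)=0$ we conclude $g\equiv 0$, establishing the required density on both sides.

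\emph{Proof of $(1)\Rightarrow(2)$.} Condition (i) is already contained in the discussion preceding the theorem: if $|c|\ge|a|$ the operator $T_F$ is hyponormal and therefore not hypercyclic by \Cref{Theorem-Hyponormal}. Suppose now, towards a contradiction, that $|a|>|c|$ but $\mathcal E\cap\mathbb T=\varnothing$. The admitted spectral theorem for continuous-symbol Toeplitz operators, together with the fact that $\w_F(\lambda)=-1$ for every $\lambda\in\mathcal E$ (because the $ae^{-i\theta}$ term dominates), gives $\sigma(T_F)=\overline{\mathcal E}$; \Cref{Th:HcIntSp} then forces $F(\mathbb T)$ to be tangent to $\mathbb T$ with $\mathcal E$ sitting entirely on one side by connectedness. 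If $\mathcal E\subset\mathbb D$ then $\|T_F\|=\|F\|_\infty=1$, and \Cref{Prop:HCcn}\,(a) contradicts hypercyclicity. The main obstacle is the remaining case $\mathcal E\subset\mathbb C\setminus\overline{\mathbb D}$: here $T_F$ is invertible, and from the identity $T_F^*T_F-T_FT_F^*=(|c|^2-|a|^2)(I-SS^*)\le 0$ used in the paper one gets $T_FT_F^*\ge T_F^*T_F$, that is, $T_F^*$ is hyponormal. Inverting this positive operator inequality produces $T_F^{-1}(T_F^{-1})^*\ge (T_F^{-1})^*T_F^{-1}$, so $(T_F^{-1})^*$ is itself hyponormal; consequently
\[
\|T_F^{-1}\|\,=\,\|(T_F^{-1})^*\|\,=\,r((T_F^{-1})^*)\,=\,r(T_F^{-1})\,=\,\max_{\lambda\in\overline{\mathcal E}}\frac{1}{|\lambda|}\,=\,1,
\]
and \Cref{Rk:EquivHcInv} combined with \Cref{Prop:HCcn}\,(a) yields the required contradiction.
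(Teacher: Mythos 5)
Your proof of $(2)\Rightarrow(1)$ follows the paper's argument essentially verbatim: the same eigenvectors (the paper derives $\ker(T_F-F(z_0))=\bb C\cdot\frac{1}{(w-z_0)(w-a/(cz_0))}$ by solving the eigenvalue equation as a functional identity on $\bb D$ rather than as a coefficient recurrence, but the outcome is identical), the same two open sets $F^{-1}(\mathcal E\cap\bb D)$ and $F^{-1}(\mathcal E\setminus\overline{\bb D})$, and the same reduction of the density claim to \Cref{Le:SkL2}. One bookkeeping correction: since $\frac{1}{z-w}=\frac1z k_{1/\bar z}(w)$, pairing with $g$ gives $\langle g,h_z\rangle=\overline{(1/z)}\,g(1/\bar z)-\overline{(1/z')}\,g(1/\bar z')$, so the orthogonality relation carries conjugated coefficients; to turn it into $q(u)=q(d/u)$ for an \emph{analytic} $q$ you must conjugate the whole identity and set $q(w)=w\,\overline{g(\bar w)}$ with $d=c/a$, exactly as the paper does. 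This is a two-line fix and does not affect the structure.

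Your proof of $(1)\Rightarrow(2)$(ii) takes a genuinely different route. The paper stays elementary: if $\mathcal E\cap\bb T=\varnothing$, convexity of the ellipse yields a $\theta_0$ with $\Re(e^{i\theta_0}F)\ge1$ on $\bb T$ (in the outer case; the inner case gives $\|T_F\|\le1$ at once), hence $\Re\langle e^{i\theta_0}T_Ff,f\rangle\ge\|f\|_2^2$ and $\|T_Ff\|\ge\|f\|_2$ by Cauchy--Schwarz, contradicting \Cref{Prop:HCcn}(b). You instead use the admitted spectral description to get $\sigma(T_F)=\overline{\mathcal E}$, and in the outer case argue that $T_F$ is invertible with $(T_F^{-1})^*$ hyponormal, hence normaloid, so that $\|T_F^{-1}\|=r(T_F^{-1})\le1$, contradicting \Cref{Rk:EquivHcInv} combined with \Cref{Prop:HCcn}(a). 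All the steps you rely on are true: the inverse of an invertible hyponormal operator is hyponormal by operator monotonicity of $P\mapsto P^{-1}$ applied to $A^*A\ge AA^*>0$, and the normaloid property does follow by iterating \Cref{Fact:Hypo}, since $\|N^nx\|^2\le\|N^{n+1}x\|\,\|N^{n-1}x\|$ gives $\|N^n\|\ge\|N\|^n$. The cost of your route is that it leans on the unproved spectral theorem for continuous symbols and on two operator-theoretic facts not recorded in the paper, which you would need to supply; what it buys is a sharper structural statement ($T_F^{-1}$ is a co-hyponormal contraction whenever $\mathcal E$ avoids $\overline{\bb D}$) and a clear illustration of where \Cref{Rk:EquivHcInv} enters. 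Both arguments are correct.
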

\begin{proof}
$(1)\implies (2)$: let us first assume that $T_F$ is hypercyclic on $H^2$. We already seen that $(i)$ is necessary (because if $|a|\le|c|$, then $T_F$ is hyponormal and hence not hypercyclic). Let us now check $(ii)$. Argue by absurd and assume that $\mathcal E\cap \bb T=\varnothing$.
    
\begin{figure}[ht]
\begin{tikzpicture}
 \draw[->](-2,0)--(2,0);
 \draw[->](0,-2)--(0,2);
 \draw(1,0)arc(0:360:1);
x \draw(-4,.5)node{\includegraphics[scale=.7]{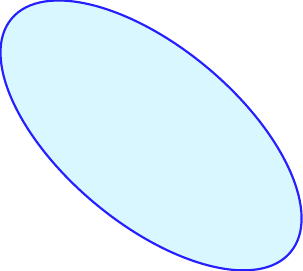}};
 \draw[blue](-4,.5)node{$\mathcal E$};
 \draw[blue](-4,-1.2)node{$F(\bb T)$};
 \draw(1.1,.1)node{$1$};
 \draw(.8,-.9)node{$\bb T$};
\end{tikzpicture}
\caption{}
\label{fig7}
\end{figure}

Hence, there exists $\theta_0\in\bb R$ such that $\Re(e^{i\theta_0}F(z))\ge1$ for every $z\in\bb T$. In particular, for every $f\in H^2$, we have:
\begin{align*}
\Re\ps{e^{i\theta_0}T_Ff,f}_2\,
&=\,\Re\left(\int_{\bb T}e^{i\theta_0}F(z)|f(z)|^2\,\mathrm dm(z)\right)\\
&=\,\int_{\bb T}\Re(e^{i\theta_0}F(z))|f(z)|^2\,\mathrm dm(z)\\
&\ge\,\int_{\bb T}|f(z)|^2\,\mathrm dm(z)\,=\,\|f\|^2_2.
\end{align*}
Hence, by Cauchy-Schwartz' inequality, we get $\|T_Ff\|_2\|f\|_2\ge\|f\|_2^2$. In particular, for every $f\in H^2,\,f\neq0$, we have $\|T_Ff\|_2\ge\|f\|_2$. In \Cref{Prop:HCcn}, we have seen that this  contradicts the hypercyclicity of $T_F$ and this gives a contradiction. Si $\mathcal E\cap\mathbb T\neq\varnothing$.

$(2)\implies (1)$: let us now assume that $(i)$ and $(ii)$ are satisfied and let us check that $T_F$ is hypercyclic on $H^2$. By \Cref{Le:SkL1}, we know that $F(\bb T)$ is an elliptic curve and its interior $\mathcal E$ is such that 
\[\mathcal E=F\left(\left\{z\in\bb C\,;\,1<|z|<\left|\frac ac\right|\right\}\right).\]

\begin{figure}[ht]
\begin{tikzpicture}
 \draw(0,0)node{\includegraphics[scale=.75]{Shk1.pdf}};
 \draw(0,1.8)node{$F$};
 \draw[blue](3,0)node{$\mathcal E$};
 \draw[blue](3.9,-1.15)node{$F(\bb T)$};
 \draw(-2.1,-.15)node{$1$}(-1.2,0)node{$\left|\frac ac\right|$};
 \draw(-3,-.9)node{$\bb T$};
 \draw(1.7,-1)arc(0:360:1);
 \draw(-.3,-1.65)node{$\bb T$};
\end{tikzpicture}
\caption{}
\label{fig8}
\end{figure}

Take now $\mu= F(z_0)\in\mathcal E$ for some $z_0\in\bb C$ with $1<|z_0|<\left|\frac ac\right|$. Let $f\in H^2$. Since $T_F=aS^*+bI+cS$, we have
\begin{align*}
f\in\ker(T_F-\mu I)
&\iff T_Ff=F(z_0)f\\
&\iff \forall\,z\in\bb D,~a\frac{f(z)-f(0)}z+bf(z)+czf(z)\,=\,F(z_0)f(z)\\
&\iff \forall\,z\in\bb D,~a(f(z)-f(0))+bzf(z)+cz^2f(z)=zF(z_0)f(z)\\
&\iff \forall\,z\in\bb D,~\Big(a+\big(b-F(z_0)\big)z+cz^2\Big)f(z)=af(0)\\
&\iff \forall\,z\in\bb D,~\left(a-\left(\frac a{z_0}+cz_0\right)z+cz^2\right)f(z)=af(0)\\
&\iff \forall\,z\in\bb D,~\left(z^2-\left(\frac a{cz_0}+z_0\right)z+\frac ac\right)f(z)\,=\,\frac ac f(0).
\end{align*}
Now observe that 
\[z^2-\left(\frac a{cz_0}+z_0\right)z+\frac ac\,=\,(z-z_0)\left(z-\frac a{cz_0}\right)\]
and since $1<|z_0|<\left|\frac ac\right|$, the function 
\[z\longmapsto\frac 1{~\displaystyle (z-z_0)\left(z-\frac a{cz_0}\right)~}\]
belongs to $H^2$. Thus
\[\ker(T_F-\mu I)\,=\,\bb C\cdot \frac 1{~\displaystyle (z-z_0)\left(z-\frac a{cz_0}\right)~}\]
where $\mu= F(z_0)$ and $1<|z_0|<\left|\frac ac\right|$. Since $\mathcal E\cap\bb T\neq\varnothing$, the two open sets $\mathcal O_1=\mathcal E\cap\bb D$ and $\mathcal O_2=\mathcal E\cap(\bb C\setminus\overline{\bb D})$ are non empty. Denote by $\Omega_1=F^{-1}(\mathcal O_1)$ and $\Omega_2=F^{-1}(\mathcal O_2)$. Then $\Omega_1$ and $\Omega_2$ are two non empty open subsets of $\left\{1<|z_0|<\left|\frac ac\right|\right\}$ and
\[\spa\left(\frac 1{~\displaystyle (z-z_0)\left(z-\frac a{cz_0}\right)~}\,;\,z_0\in\Omega_1\right)\subset H_-(T_F),\]
\[\spa\left(\frac 1{~\displaystyle (z-z_0)\left(z-\frac a{cz_0}\right)~}\,;\,z_0\in\Omega_2\right)\subset H_+(T_F).\]
By \ref{GScrit}, it is sufficient to check that if $\Omega$ is a non empty open set of $\left\{1<|z_0|<\left|\frac ac\right|\right\}$ then 
\[\spa\left(\frac 1{~\displaystyle (z-z_0)\left(z-\frac a{cz_0}\right)~}\,;\,z_0\in\Omega\right)~\text{is dense in}~H^2.\]
So let $h\in H^2$ such that for every $z_0\in\Omega$, we have 
\[h\perp \frac 1{~\displaystyle (z-z_0)\left(z-\frac a{cz_0}\right)~}.\]
Note that if $z_0^2\neq a/c$, we have
\begin{align*}
\frac 1{~\displaystyle (z-z_0)\left(z-\frac a{cz_0}\right)~}\,
&=\,\frac1{~\displaystyle z_0-\frac a{cz_0}~}\left(\frac1{z-z_0}-\frac1{~\displaystyle z-\frac a{cz_0}~}\right)\\
&=\,\frac1{~\displaystyle z_0-\frac a{cz_0}~}\left(\frac{-1/z_0}{~\displaystyle 1-\frac z{z_0}~}+\frac{cz_0/a}{~\displaystyle 1-\frac {cz_0}az~}\right)\\
&=\,\frac1{~\displaystyle z_0-\frac a{cz_0}~}\left(-\frac1{z_0}k_{\frac1{\overline{z_0}}}+\frac{cz_0}ak_{\frac{\overline{cz_0}}{\overline{a}}}\right),
\end{align*}
where we recall that $k_w$ denotes the reproducing kernel at $w$ of $H^2$ (observe that $\frac1{\overline{z_0}}\in\bb D$ and $\frac{\overline{cz_0}}{\overline{a}}\in \bb D$). Hence for every $z_0\in\Omega$ with $z_0^2\neq a/c$, we have
\begin{align*}
0\,
&=\,\ps{h,-\frac1{z_0}k_{\frac1{\overline{z_0}}}+\frac{cz_0}ak_{\frac{\overline{cz_0}}{\overline{a}}}}\\
&=\, -\frac1{\overline{z_0}}h\left(\frac1{\overline{z_0}}\right)+\frac{\overline{cz_0}}{\overline{a}}h\left(\frac{\overline{cz_0}}{\overline{a}}\right).
\end{align*}
Thus for every $z_0\in\Omega$ with $z_0^2\neq a/c$, we have
\[\frac1{z_0}\overline{h\left(\frac1{\overline{z_0}}\right)}\,=\,\frac{cz_0}a\overline{h\left(\frac{\overline{cz_0}}{\overline{a}}\right)}.\]
Denote by $q(z)=z\overline{h(\bar z)}$ for $z\in\bb D$. We know that $q$ is analytic on $\bb D$ and if $A=\{z_0^{-1}\,;\,z_0\in\Omega,z_0^2\neq z/c\}$, then $A$ is a nonempty open subset of $W=\{z\in\bb C\,;\,|c/a|<|z|<1\}$ and for all $z\in A$, $q(z)=q\left(\frac a{cz}\right)$. According to \Cref{Le:SkL2}, we conclude that  $q$ is constant. But $q(0)=0$, whence $q\equiv0$ and therefore $h\equiv0$.
\end{proof}
\subsection{Some more recent results}
To go a little bit further, we conclude by mentioning three recent papers.

The first one is by Baranov - Lishanskii, 2016 \cite{BaranovLishanskii2016}. They investigate the more general case where $F$ has the form 
\[F(e^{i\theta})=P(e^{-i\theta})+\phi(e^{i\theta}),\]
where $P$ is an analytic polynomial and $\phi\in A(\bb D)$. They provided some necessary conditions (of a spectral kind) for $T_F$ to be hypercyclic on $H^2$ as well as some sufficient conditions based on an explicit description of the eigenvectors of $T_F$ and on the Godefroy - Shapiro criterion. A novel feature of their conditions is the role of univalence or $N$-valence (where $N$ is the degree of $P$) of the symbol. The same line of approach was taken in the subsequent work \cite{AbakumovBaranovCharpentierLishanskii2021} of Abakumov, Baranov, Charpentier and Lishanskii where they extended results of \cite{BaranovLishanskii2016} to the case of more general symbol $F$ of the form 
\[F(e^{i\theta})=R(e^{-i\theta})+\phi(e^{i\theta}),\]
where $R$ is a rational function without poles in $\overline{\bb D}$ and $\phi\in A(\bb D)$. The novel feature of the approach taken in \cite{AbakumovBaranovCharpentierLishanskii2021} is the use of deep results of Solomyak providing necessary and sufficient conditions for finite sets of functions to be dense in $H^2$.

Finally, using the model theory for Toeplitz operators with smooth symbols developed by D. Yakubovich in the 80's, Fricain - Grivaux - Ostermann obtained in \cite{FricainGrivauxOstermann_preprint} some new necessary and sufficient conditions for Toeplitz operators to be hypercyclic on $H^p$, $1<p<\infty$. We will take some time to discuss the flavor of some results obtained in \cite{FricainGrivauxOstermann_preprint}. 
Let  $q$ be the conjugate exponent of $p$, that is $\frac1p+\frac1q=1$. Consider the following conditions on the symbol $F$:
\begin{enumerate}[(H1)]
    \item\label{H1}$F$ belongs to the class $C^{1+\varepsilon}(\mathbb T)$ for some $\varepsilon>\max(1/p,1/q)$, and its derivative $F'$ does not vanish on $\mathbb T$;
   \item\label{H2} the curve $F(\mathbb T)$ self-intersects a finite number of times, i.e. 
   the unit circle $\mathbb T$ can be partitioned into a finite number of closed arcs $\alpha_1,\dots,\alpha_m$ such that
     \begin{enumerate}[(a)]
       \item $F$ is injective on the interior of each arc $\alpha_j,~1\le j \le m$;
        \item for every $i\neq j,~1\le i,j\le m$, the sets $F(\alpha_j)$ and $F(\alpha_j)$ have disjoint interiors;
    \end{enumerate}
    \item\label{H3}for every $\lambda\in\mathbb C\setminus F(\mathbb T)$, $\w_F(\lambda)\le0$, where we recall that $\w_F(\lambda)$ denotes the winding number of the curve $F(\mathbb T)$ around $\lambda$.
 \end{enumerate}
 Recall that $\sigma(T_F)=F(\bb T)\cup\{\lambda\in\bb C\setminus F(\bb T)\,;\,\w_F(\lambda)<0\}$, and according to \eqref{eq:windnumber-spectre}, the condition \ref{H3} is a necessary condition for hypercyclicity. We denote by $\mathcal C$ the set of all connected component of $\sigma(T_F)\setminus F(\bb T)$. As we have seen in \Cref{Th:HcIntSp}, if $T$ is a hypercyclic operator on a separable Banach space $X$, every component of the spectrum of $T$ must interset $\bb T$. It turns out that, in our current setting, a stronger property must hold.
 \begin{theorem}[Fricain - Grivaux - Ostermann, 2025 \cite{FricainGrivauxOstermann_preprint}]\label{Thm-CN}
 Let $p>1$ and let $F$ satisfy \ref{H1}, \ref{H2} and \ref{H3}. If $T_F$ is hypercyclic on $H^p$, then every component of the interior of the spectrum of $T_F$ must intersect $\bb T$.
 \end{theorem}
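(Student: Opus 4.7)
The plan is to argue by contradiction, following the strategy of the proof of \Cref{Th:HcIntSp}, with the classical Riesz spectral decomposition replaced by Yakubovich's model theory for Toeplitz operators with smooth symbols, as exploited in \cite{FricainGrivauxOstermann_preprint}. Suppose there exists a connected component $C$ of $\operatorname{int}(\sigma(T_F))$ with $C\cap\bb T=\varnothing$. As $C$ is open and connected, either $C\subset\bb D$ or $C\subset\bb C\setminus\overline{\bb D}$; a duality argument relating $T_F$ on $H^p$ to the Toeplitz operator with symbol $\overline F$ acting on $H^q$ reduces us to the case $C\subset\bb D$.

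Under hypotheses \ref{H1} and \ref{H2}, Yakubovich's model realises $T_F$, up to similarity, as a multiplication-type operator on a Hardy-type space built from a Riemann-surface cover of $\bb C\setminus F(\bb T)$ whose number of sheets over each component $D$ equals $-\w_F(D)$. The first step is to extract from this model a topological direct sum decomposition
\[
H^p\,=\,M_C\oplus N_C
\]
of closed $T_F$-invariant subspaces such that $\sigma(T_F|_{M_C})=\overline C$, where $M_C$ corresponds to the sheets of the model lying over $C$. Hypothesis \ref{H3}, together with the strict negativity of $\w_F$ on $C$ and the Fredholm-index formula, gives $\dim\ker(T_F-\lambda I)=-\w_F(\lambda)\ge 1$ for every $\lambda\in C$, so $M_C$ is nontrivial and in fact contains a holomorphic family of eigenvectors of $T_F$.

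Next, one transfers hypercyclicity to the restriction: hypercyclicity of a direct sum $A\oplus B$ forces hypercyclicity of each summand (projecting a hypercyclic vector yields hypercyclic vectors for both factors), so $T_F|_{M_C}$ must be hypercyclic on $M_C$. To derive a contradiction, one uses the explicit form of the model together with $C\subset\bb D$: when $\overline C\subset\bb D$ strictly, the spectral radius of $T_F|_{M_C}$ equals $\sup_{\overline C}|\lambda|<1$, so $\|(T_F|_{M_C})^n\|\to 0$ and hypercyclicity is ruled out by \Cref{Prop:HCcn}; when $\overline C$ meets $\bb T$ only through boundary arcs of $F(\bb T)$, a finer analysis of the multiplication-type model furnishes reproducing-kernel-type eigenvectors for the adjoint $(T_F|_{M_C})^*$ at points of $C$, so that $\sigma_p((T_F|_{M_C})^*)\neq\varnothing$ and hypercyclicity is ruled out by \Cref{prop:spectre-ponctuel-adjoint-vide}.

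The principal obstacle is the construction of the topological direct sum decomposition $H^p=M_C\oplus N_C$ with $\sigma(T_F|_{M_C})=\overline C$. Because $\overline C$ typically shares boundary arcs with $F(\bb T)$ and with the closures of neighbouring components, it is generally \emph{not} an isolated closed piece of $\sigma(T_F)$ and the classical Riesz projection cannot be used; Yakubovich's model is precisely the device that replaces $T_F$ by a sufficiently structured multiplication operator to make this decomposition transparent at the level of the model, from which it can then be pulled back to $H^p$ through the similarity. The symmetry reduction from $C\subset\bb C\setminus\overline{\bb D}$ to $C\subset\bb D$ also deserves care, as the adjoint of a Toeplitz operator on $H^p$ naturally lives on $H^q$ rather than on $H^p$ itself.
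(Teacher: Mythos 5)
First, a caveat: the paper does not actually prove \Cref{Thm-CN}; it only records that the proof ``relies on the fact that under conditions \ref{H1}, \ref{H2} and \ref{H3}, the operator $T_F$ has an $H^\infty$-functional calculus on the interior of its spectrum'' and refers to \cite{FricainGrivauxOstermann_preprint}. So your proposal can only be judged on its own merits and against that stated key ingredient. Your overall strategy (find a substitute for the Riesz projection of \Cref{Th:HcIntSp}, pass hypercyclicity to the quasi-factor $T_F|_{M_C}$, then contradict one of the obstructions of \Cref{section:HC}) is the right shape, and the quasi-factor step is sound. But the argument has a genuine gap exactly where you locate ``the principal obstacle'': the topological direct sum $H^p=M_C\oplus N_C$ with $\sigma(T_F|_{M_C})=\overline C$ is asserted, not constructed. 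Saying that Yakubovich's model makes it ``transparent'' is not a proof: in the model the pieces of $\mathbb C\setminus F(\mathbb T)$ are coupled through boundary matching conditions along $F(\mathbb T)$, and a component $C$ of $\operatorname{int}\sigma(T_F)$ is in general a union of several components of $\sigma(T_F)\setminus F(\mathbb T)$ glued along arcs of $F(\mathbb T)$, so the ``sheets lying over $C$'' picture does not even parse for such a $C$. The device that actually produces the idempotent is the $H^\infty$-functional calculus: since $C$ is clopen in the open set $\operatorname{int}\sigma(T_F)$, the indicator $\mathbb{1}_C$ belongs to $H^\infty(\operatorname{int}\sigma(T_F))$ and $\mathbb{1}_C(T_F)$ is the desired projection. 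That is the content you would need to supply, and it is precisely what is nontrivial.

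Two further steps would fail as written. First, the reduction of the case $C\subset\mathbb C\setminus\overline{\mathbb D}$ to $C\subset\mathbb D$ via ``a duality argument relating $T_F$ on $H^p$ to $T_{\overline F}$ on $H^q$'' cannot work: hypercyclicity never transfers to the adjoint (indeed \Cref{prop:spectre-ponctuel-adjoint-vide} shows $\sigma_p(T_F^*)=\varnothing$ is forced, and $T_F^*$ is typically very far from hypercyclic). The correct reduction, as in the proof of \Cref{Th:HcIntSp}, is to invert the restriction: if $0\notin\overline C$ then $T_F|_{M_C}$ is invertible and its inverse is again hypercyclic by \Cref{Rk:EquivHcInv}. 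Second, the endgame is incomplete: since $C$ is only assumed disjoint from $\mathbb T$, its closure $\overline C$ may well meet $\mathbb T$, in which case the spectral radius of $T_F|_{M_C}$ equals $1$ and neither part of \Cref{Prop:HCcn} applies (spectral radius $\le 1$ does not preclude hypercyclicity, only norm $\le 1$ does). Your fallback for this case --- ``a finer analysis of the multiplication-type model furnishes reproducing-kernel-type eigenvectors for the adjoint'' --- is again an assertion of the conclusion rather than an argument, and it is exactly here that the $H^\infty$-calculus (applied to suitable functions on $C$, not just to $\mathbb{1}_C$) has to do real work. As it stands, the proposal identifies the correct obstacles but overcomes none of them.
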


The proof of Theorem \ref{Thm-CN} relies on the fact that under conditions \ref{H1}, \ref{H2} and \ref{H3}, the operator $T_F$ has an $H^\infty$-functional calculus on the interior of its spectrum. 
 
 In the other direction, we have the following result.
  \begin{theorem}[Fricain - Grivaux - Ostermann, 2025 \cite{FricainGrivauxOstermann_preprint}]
 Let $p>1$ and let $F$ satisfy \ref{H1}, \ref{H2} and \ref{H3}. Suppose that, for every $\Omega\in\mathcal C$, $\Omega\cap\bb T\neq\varnothing$. Then $T_F$ is hypercyclic on $H^p$.
 \end{theorem}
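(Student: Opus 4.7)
My plan is to deduce this result from \ref{GScrit} by combining an analytic continuation argument with a completeness statement for the eigenvectors of $T_F$.

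First, the hypotheses \ref{H1}, \ref{H2}, \ref{H3} place $T_F$ in the Fredholm regime off $F(\bb T)$, and the index formula $j(T_F-\lambda)=-\w_F(\lambda)$ (extended to $H^p$) together with \ref{H3} ensure that for every $\lambda$ in the interior of $\sigma(T_F)$ one has $\dim\ker(T_F-\lambda)=-\w_F(\lambda)\ge 1$. By analytic Fredholm perturbation theory, around any $\lambda_0\in\Omega$ (with $\Omega\in\mathcal C$) one can choose an analytic basis $\lambda\mapsto(v_\lambda^{(1)},\dots,v_\lambda^{(d_\Omega)})$ of $\ker(T_F-\lambda)$ on some neighborhood of $\lambda_0$, where $d_\Omega=-\w_F(\lambda)$ is constant on $\Omega$.

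Next, by assumption each $\Omega\in\mathcal C$ meets $\bb T$; since $\Omega$ is an open subset of $\bb C$, both $\Omega_-:=\Omega\cap\bb D$ and $\Omega_+:=\Omega\cap(\bb C\setminus\overline{\bb D})$ are non-empty open subsets of $\Omega$. Setting
\[
E_\pm\,:=\,\bigcup_{\Omega\in\mathcal C}\bigcup_{\lambda\in\Omega_\pm}\ker(T_F-\lambda),
\]
we have $\spa(E_-)\subset H_-(T_F)$ and $\spa(E_+)\subset H_+(T_F)$, so by \ref{GScrit} it is enough to show that $\spa(E_-)$ and $\spa(E_+)$ are both dense in $H^p$.

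I would establish density by duality, identifying $(H^p)^*$ with $H^q$ ($1/p+1/q=1$) via the standard pairing $\ps{\cdot,\cdot}$. Let $h\in H^q$ annihilate $E_-$ (the $E_+$ case is symmetric). Fix $\Omega\in\mathcal C$. For any local analytic basis $(v_\lambda^{(i)})_{i=1}^{d_\Omega}$ on a neighborhood $U\subset\Omega$, each scalar map $\lambda\mapsto\ps{h,v_\lambda^{(i)}}$ is analytic on $U$ and vanishes on the non-empty open set $U\cap\Omega_-$. By the identity principle on the connected open set $U$, it vanishes on all of $U$; patching along an open cover of $\Omega$ by such neighborhoods (using that on each overlap the two local bases span the same kernel) propagates the vanishing to every $\lambda\in\Omega$. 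Ranging over $\mathcal C$, the functional $h$ annihilates every eigenvector of $T_F$ whose eigenvalue lies in the interior of $\sigma(T_F)$.

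The remaining step, and the main obstacle, is to conclude that $h\equiv 0$, i.e.\ to establish \emph{completeness} in $H^p$ of the family of eigenvectors of $T_F$ associated to the interior of $\sigma(T_F)$. This is precisely where Yakubovich's functional model for $T_F$ does the heavy lifting: under \ref{H1}, \ref{H2}, \ref{H3} the same model that provides the $H^\infty$-functional calculus invoked in \Cref{Thm-CN} identifies $T_F$ with a multiplication operator on a suitable space of analytic sections over $\sigma(T_F)$, within which the eigenvectors play the role of (weighted) reproducing kernels and are manifestly total. Feeding this completeness into the preceding step yields $h=0$, so both $\spa(E_-)$ and $\spa(E_+)$ are dense in $H^p$, and \ref{GScrit} delivers the hypercyclicity of $T_F$ on $H^p$.
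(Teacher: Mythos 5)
These notes do not actually prove this theorem: it is stated and the reader is referred to \cite{FricainGrivauxOstermann_preprint}, with only the remark that the argument rests on Yakubovich's model theory and the resulting $H^\infty$-functional calculus. So there is no in-paper proof to compare yours against, and your proposal has to be judged on its own. Its architecture is sensible and consistent with the hints the paper gives: reduce to \ref{GScrit}, use Coburn plus the index formula to get $\dim\ker(T_F-\lambda)=-\w_F(\lambda)\ge 1$ on each $\Omega\in\mathcal C$ (note: on $\sigma(T_F)\setminus F(\bb T)$, not on the full interior of $\sigma(T_F)$, which may contain points of the curve where $T_F-\lambda$ is not Fredholm), pick locally analytic bases of the kernels, use the hypothesis $\Omega\cap\bb T\neq\varnothing$ to make $\Omega_\pm$ nonempty, and propagate the vanishing of $\lambda\mapsto\ps{h,v_\lambda^{(i)}}$ from $\Omega_\pm$ to all of $\Omega$ by the identity principle. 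All of that is fine and correctly reduces the theorem to a single claim: the family of all eigenvectors of $T_F$ with eigenvalues in $\bigcup\mathcal C$ is complete in $H^p$.

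That claim is exactly where your proof stops being a proof. Asserting that in Yakubovich's model the eigenvectors ``play the role of weighted reproducing kernels and are manifestly total'' is not an argument; it is a restatement of the completeness theorem you need, and it is the genuinely hard analytic content of \cite{FricainGrivauxOstermann_preprint} (constructing the similarity onto the model space under \ref{H1}--\ref{H3}, identifying the eigenvectors with the kernels of that space, and verifying that the similarity is onto so that totality transfers back to $H^p$). The two cases proved in full in these notes show that this step is never free: for anti-analytic symbols it is the uniqueness principle applied to $f(\lambda)=\ps{f,k_\lambda}$, and for tridiagonal symbols it is \Cref{Le:SkL2}, whose proof needs the functional equation $q(z)=q(d/z)$ and Liouville's theorem. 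For general symbols satisfying \ref{H1}--\ref{H3} there is no comparably elementary identity, which is precisely why the authors invoke the model. So the proposal is a correct reduction with the central lemma left unproved; to close the gap you would need to either state and prove the completeness of the eigenvector family under \ref{H1}--\ref{H3}, or at least cite it as a precise result (with the model space and the similarity made explicit) rather than as something ``manifest.''
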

 When the set $\sigma(T_F)\setminus F(\bb T)$ is connected, we then deduce the following complete characterization.
  \begin{corollary}[Fricain - Grivaux - Ostermann, 2025 \cite{FricainGrivauxOstermann_preprint}]
 Let $p>1$ and let $F$ satisfy \ref{H1}, \ref{H2} and \ref{H3}. Suppose that $\sigma(T_F)\setminus F(\bb T)$ is connected. Then, the following are equivalent:
 \begin{enumerate}[(i)]
     \item $T_F$ is hypercyclic on $H^p$;
     \item $\bb T\cap \overset{\circ}{\sigma(T_F)}\neq\varnothing$.
 \end{enumerate}
 \end{corollary}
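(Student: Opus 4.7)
The strategy is to combine the two preceding theorems after reconciling two different notions of ``connected components'' — the components of $\overset{\circ}{\sigma(T_F)}$ appearing in the necessary condition \Cref{Thm-CN}, and the elements of $\mathcal C$ (the components of $\sigma(T_F)\setminus F(\mathbb T)$) appearing in the sufficient condition. Under \ref{H1}--\ref{H2}, the curve $F(\mathbb T)$ is a closed $1$-dimensional $C^{1+\varepsilon}$-smooth subset of $\mathbb C$ with empty interior, so that $\Omega:=\sigma(T_F)\setminus F(\mathbb T)=\{\w_F<0\}$ is open and dense in $\sigma(T_F)$. The hypothesis that this set is connected immediately gives $\mathcal C=\{\Omega\}$. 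Moreover, from the sandwich
\[
\Omega\,\subseteq\,\overset{\circ}{\sigma(T_F)}\,\subseteq\,\overline{\Omega},
\]
and the elementary fact that any set lying between a connected set and its closure is connected, the interior $\overset{\circ}{\sigma(T_F)}$ is itself connected.

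For (i)$\Rightarrow$(ii), I would apply \Cref{Thm-CN}: hypercyclicity of $T_F$ forces every component of $\overset{\circ}{\sigma(T_F)}$ to meet $\mathbb T$. Hypercyclicity also rules out the degenerate case $\Omega=\varnothing$ (in which $\sigma(T_F)$ would reduce to the smooth curve $F(\mathbb T)$, violating standard spectral obstructions), so $\overset{\circ}{\sigma(T_F)}$ is a non-empty single component, which therefore meets $\mathbb T$; this is exactly (ii).

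For (ii)$\Rightarrow$(i), the goal is to apply the sufficient condition theorem, for which I need $\Omega\cap\mathbb T\neq\varnothing$. Starting from a point $\lambda_0\in\overset{\circ}{\sigma(T_F)}\cap\mathbb T$, I would pick an open ball $B$ around $\lambda_0$ contained in $\sigma(T_F)$; the intersection $B\cap\mathbb T$ is then a non-trivial open arc of $\mathbb T$ sitting inside $\sigma(T_F)=F(\mathbb T)\cup\Omega$. Because, under \ref{H1}--\ref{H2}, the intersection $F(\mathbb T)\cap\mathbb T$ of the two distinct smooth curves is nowhere dense in $\mathbb T$, this arc must contain points outside $F(\mathbb T)$, yielding $\Omega\cap\mathbb T\neq\varnothing$. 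The sufficient condition theorem applied to the unique element $\Omega\in\mathcal C$ then delivers the hypercyclicity of $T_F$.

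The one genuinely delicate point is the passage from ``$\overset{\circ}{\sigma(T_F)}$ meets $\mathbb T$'' to ``$\Omega$ meets $\mathbb T$'' in (ii)$\Rightarrow$(i): a priori, an interior point of $\sigma(T_F)$ living on $\mathbb T$ could lie on $F(\mathbb T)$, and the sandwich alone only delivers $\overline{\Omega}\cap\mathbb T\neq\varnothing$. Closing this gap requires excluding the geometric pathology that $F(\mathbb T)$ coincides with an arc of $\mathbb T$ — the main hidden obstacle in what is otherwise a direct bookkeeping combination of the two preceding theorems.
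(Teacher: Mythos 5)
Your overall architecture is the intended one: the corollary is meant to follow by combining \Cref{Thm-CN} (necessity) with the sufficiency theorem that precedes the corollary, after identifying $\mathcal C$ with the single set $\Omega=\sigma(T_F)\setminus F(\mathbb T)$ and using the sandwich $\Omega\subseteq\overset{\circ}{\sigma(T_F)}\subseteq\overline{\Omega}$ (valid because $F(\mathbb T)$ is a closed rectifiable curve with empty interior). However, the step you yourself flag as delicate is a genuine gap, and the route you propose for closing it does not work. Two distinct $C^{1+\varepsilon}$ curves can perfectly well coincide along a whole arc, so there is no reason for $F(\mathbb T)\cap\mathbb T$ to be nowhere dense in $\mathbb T$; nothing in \ref{H1}--\ref{H2} excludes a symbol whose image follows an arc of the unit circle and then detours away (only real-analyticity would force isolated intersections). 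So your argument that the arc $B\cap\mathbb T$ must contain a point of $\Omega$ breaks down, and with it the implication $(ii)\Rightarrow(i)$.

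The correct way to close the gap uses the connectedness hypothesis itself, together with the sandwich you already established. Suppose $\Omega\cap\mathbb T=\varnothing$. Since $\Omega$ is open, connected and disjoint from $\mathbb T$, it lies entirely in $\mathbb D$ or entirely in $\mathbb C\setminus\overline{\mathbb D}$; hence $\overline{\Omega}$ is contained in $\overline{\mathbb D}$ or in $\mathbb C\setminus\mathbb D$. But $\overset{\circ}{\sigma(T_F)}\subseteq\overline{\Omega}$, and if some $\lambda_0\in\mathbb T$ belonged to $\overset{\circ}{\sigma(T_F)}$, this open set would contain a full ball around $\lambda_0$, hence points of modulus both $<1$ and $>1$, contradicting either containment. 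Thus $(ii)$ forces $\Omega\cap\mathbb T\neq\varnothing$, which is exactly the hypothesis of the sufficiency theorem. A second, more minor, flaw: in $(i)\Rightarrow(ii)$ you dismiss the case $\Omega=\varnothing$ by appeal to ``standard spectral obstructions'' to an operator whose spectrum is a curve; no such obstruction appears in these notes (and none holds in general). The case is indeed vacuous, but for a geometric reason: by \ref{H2} the curve $F(\mathbb T)$ has only finitely many multiple points, and across any simple point the winding number jumps by $1$, so under \ref{H3} some complementary component has $\w_F<0$ and $\Omega\neq\varnothing$ unconditionally. With these two repairs your proof is complete and matches the deduction the paper has in mind.
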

We refer to \cite{FricainGrivauxOstermann_preprint} for more deep results and proof of all the previous results. 

\bibliographystyle{plain}
\bibliography{biblio}
\end{document}